\newtheorem{theorem}{Theorem}[section]
\newtheorem{proposition}[theorem]{Proposition}
\newtheorem{lemma}[theorem]{Lemma}
\theoremstyle{definition}
\newtheorem{definition}[theorem]{Definition}
\newtheorem{remark}[theorem]{Remark}
\begin{document}


\title[]
{Examples of Ricci-mean curvature flows}


\author{Hikaru Yamamoto}
\address{Department of Mathematics, Faculty of Science, Tokyo University of Science, 1-3 Kagurazaka, Shinjuku-ku, Tokyo 162-8601, Japan}
\email{hyamamoto@rs.tus.ac.jp}




\begin{abstract}
Let $\pi:\mathbb{P}(\mathcal{O}(0)\oplus \mathcal{O}(k))\to \mathbb{P}^{n-1}$ be a projective bundle over $\mathbb{P}^{n-1}$ with $1\leq k \leq n-1$. 
In this paper, we show that lens space $L(k\, ;1)(r)$ with radius $r$ embedded in $\mathbb{P}(\mathcal{O}(0)\oplus \mathcal{O}(k))$ is a self-similar solution, 
where $\mathbb{P}(\mathcal{O}(0)\oplus \mathcal{O}(k))$ is endowed with the $U(n)$-invariant gradient shrinking Ricci soliton structure. 
We also prove that there exists a pair of critical radii $r_{1}<r_{2}$ which satisfies the following. 
The lens space $L(k\, ;1)(r)$ is a self-shrinker if $r<r_{2}$ and self-expander if $r_{2}<r$, and 
the Ricci-mean curvature flow emanating from $L(k\, ;1)(r)$ collapses to the zero section of $\pi$ if $r<r_{1}$ and to the $\infty$-section of $\pi$ if $r_{1}<r$. 
This gives explicit examples of Ricci-mean curvature flows. 
\end{abstract} 


\keywords{mean curvature flow, self-similar solution, Ricci flow, Ricci soliton}


\subjclass[2010]{53C42, 53C44}





\maketitle



\section{Background}\label{BCG}
Let $M$ and $N$ be manifolds, $\mathcal{G}=\{\,g_{t}\mid t\in [0,T)\,\}$ be a smooth 1-parameter family of Riemannian metrics on $N$ and 
$\mathcal{F}=\{\,F_{t}:M\to N\mid t\in [0,T')\,\}$ be a smooth 1-parameter family of immersions with $T'\leq T$. 
\begin{definition}
The pair $(\mathcal{G},\mathcal{F})$ is called a {\it Ricci-mean curvature flow} if it satisfies 
\begin{align}\label{RMCFeq}
\begin{aligned}
\frac{\partial g_{t}}{\partial t}=&-2\mathop{\mathrm{Ric}}(g_{t})\\
\frac{\partial F_{t}}{\partial t}=&H_{g_{t}}(F_{t}), 
\end{aligned}
\end{align}
where $H_{g_{t}}(F_{t})$ denotes the mean curvature vector field of $F_{t}$ calculated with the ambient metric $g_{t}$ at each time $t$. 
\end{definition}
The first equation of (\ref{RMCFeq}) is just the Ricci flow equation on $N$ and it does not depend on existence of $\mathcal{F}$. 
The second equation of (\ref{RMCFeq}) is the mean curvature flow equation though it is affected by the evolution of ambient metics $g_{t}$. 
This is a coupled flow of the Ricci flow and the mean curvature flow. 

The Ricci-mean curvature flow equation has been already appeared in some contexts. 
For example, Smoczyk \cite{Smoczyk2}, Han-Li \cite{HanLi} and Lotay-Pacini \cite{LotayPacini} 
consider the Lagrangian mean curvature flow coupled with the K\"ahler-Ricci flow, 
and generalize several results which hold in Calabi-Yau manifolds to this moving ambient setting. 
Other contexts appear in, for example, Lott \cite{Lott} and Magni-Mantegazza-Tsatis \cite{MagniMantegazzaTsatis}. 
There is a monotonicity formula for a mean curvature flow in a Euclidean space introduced by Huisken \cite{Huisken}. 
They generalized it to a Ricci-mean curvature flow coupled with a Ricci flow constructed by a gradient shrinking Ricci soliton. 

A {\it gradient shrinking Ricci soliton} is a pair of a Riemannian manifold $(N,g)$ and a function $f$ on $N$, called a potential function, which satisfies 
\[\mathop{\mathrm{Ric}}(g)+\mathop{\mathrm{Hess}}f=g. \]
From a given gradient shrinking Ricci soliton $(N,g,f)$ and an arbitrary fixed time $T\in (0,\infty)$ the solution of Ricci flow which survives on $[0,T)$ is constructed by $g_{t}=2(T-t)\Phi_{t}^{*}g$, 
where $\Phi_{t}$ is the 1-parameter family of diffeomorphisms of $N$ generated by $\frac{1}{2(T-t)}\nabla f$ with $\Phi_{0}=\mathrm{id}$. 

Motivated by their works, the author generalized the rescaling procedure due to Huisken to a Ricci-mean mean curvature flow in \cite{Yamamoto2}. 
It states that if a Ricci-mean curvature flow coupled with a Ricci flow $g_{t}=2(T-t)\Phi_{t}^{*}g$ develops singularities of type I at the same time $T$, its rescaling limit is a self-shrinker. 
The definition of self-shrinkers is the follwong. 
\begin{definition}\label{defofselsim}
An immersion $F: M\to N$ to a gradient shrinking Ricci soliton $(N,g,f)$ is called a {\it self-similar solution} with coefficient $\lambda\in\mathbb{R}$ if 
it satisfies 
\begin{align}\label{self3}
H_{g}(F)=\lambda {\nabla f}^{\bot}. 
\end{align}
If $\lambda<0$ or $\lambda>0$, it is called a {\it self-shrinker} or {\it self-expander}, respectively. 
\end{definition}
If $\lambda=0$, it is a minimal immersion. 
Moreover, it also holds that a self-similar solution with coefficient $\lambda$ is a minimal immersion in $N$ 
with respect to a conformally rescaled metric $e^{2\lambda f/m}g$, where $m=\dim M$. 
Hence, self-similar solutions can be considered as a kind of generalization of minimal submanifolds. 
When $(N,g,f)$ is the Gaussian soliton, that is, the Euclidean space with the standard metric and potential function $f=|x|^2/2$, 
the equation (\ref{self3}) is written as $H(F)=\lambda x^{\bot}$. 
Thus, Definition \ref{defofselsim} coincides with the ordinary notion of self-similar solutions in the Euclidean space. 
The original form of Definition \ref{defofselsim} for general Ricci solitons has been appeared in \cite{Lott}. 

It is well-known that a mean curvature flow in a fixed Riemannian manifold is a backward $L^2$-gradient flow of the volume functional, 
and a Ricci flow can be regarded as a gradient flow of Perelman's $\mathcal{W}$-entropy functional. 
As mentioned above, Ricci-mean curvature flows have some common properties with mean curvature flows in Euclidean spaces or more generally Ricci flat spaces. 
However, it becomes unclear that a Ricci-mean curvature flow can be considered as a backward $L^2$-gradient flow of some functional. 
To the authors knowledge, a problem to fined an appropriate functional such that its gradient flow is a Ricci-mean curvature flow is still open. 

If $M$ and $N$ are compact, for any Riemannian metric $g$ on $N$ and immersion $F:M\to N$, 
the short-time existence and uniqueness of the Ricci-mean curvature flow equation (\ref{RMCFeq}) with initial condition $(g,F)$ are assured. 
Actually, first, construct a unique short-time solution of Ricci flow on $N$ with initial metric $g$, and next, solve the second equation of (\ref{RMCFeq}) for short-time with initial immersion $F$. 
Hence, there are infinitely many examples of Ricci-mean curvature flows. 

However, explicit examples of Ricci-mean curvature flows are not known, so far. 
In this paper, we consider a lens space $L(k\,;1)(r)$ embedded in $\mathbb{P}(\mathcal{O}(0)\oplus \mathcal{O}(k))$ endowed with a gradient shrinking Ricci soliton structure, 
and investigate how it moves along the Ricci-mean curvature flow. 
The analysis is done by reducing PDE (\ref{RMCFeq}) to ODE (\ref{ODEradi2}). 
This example shows how the evolution of the ambient metrics affects the motion of submanifolds. 
To the authors knowledge, this gives a first non-trivial explicit example of Ricci-mean curvature flow, 
and the author hopes that this example inspires further research of Ricci-mean curvature flows. \\

\noindent
{\bf Organization of this paper.} 
Section \ref{BCG} is a background of a study of Ricci-mean curvature flows, 
and definitions of Ricci-mean curvature flows and self-similar solutions are given in it. 
In Section \ref{sectmain}, we briefly summarize main results of this paper. 
In Section \ref{caoconst}, we review a construction of a gradient shrinking K\"ahler Ricci soliton structure on $\mathbb{P}(\mathcal{O}(0)\oplus \mathcal{O}(k))$. 
In Section \ref{caoconst}, we see that lens spaces $L(k\,;1)(r)$ with radius $r$ are naturally embedded in $\mathbb{P}(\mathcal{O}(0)\oplus \mathcal{O}(k))$ 
and these are self-similar solutions. 
In Section \ref{motion}, we investigate the motion of the Ricci-mean curvature flow emanating from $L(k\,;1)(r)$. 
In Section \ref{motionh}, we compare it to the ordinary mean curvature flow emanating from $L(k\,;1)(r)$. 
\section{Main Results}\label{sectmain}
We give a summary of results of this paper in the following. 
An ambient space is the $k$-twisted projective-line bundle 
$\mathbb{P}(\mathcal{O}(0)\oplus \mathcal{O}(k))$ over $\mathbb{P}^{n-1}$, where $n\geq 2$ and $1\leq k \leq n-1$, and we denote it by $N_{k}^{n}$. 
It can be shown that $N_{k}^{n}$ contains $(\mathbb{C}^{n}\setminus\{0\})/\mathbb{Z}_{k}$ as an open dense subset, and 
its complement is the disjoint union of $S_{0}$ and $S_{\infty}$, where these denote the image of $0$-section and $\infty$-section respectively. 
The eliminated $\{0\}$ and the point at infinity of $\mathbb{C}^{n}\setminus\{0\}$ are replaced by $S_{0}$ and $S_{\infty}$ respectively. 
See Figure \ref{fig1}. 

It is known, by Cao \cite{Cao} and Koiso \cite{Koiso}, that there exists a unique $U(n)$-invariant gradient shrinking K\"ahler Ricci soliton structure on $N_{k}^{n}$. 
We denote its Riemannian metric and potential function by $g$ and $f$, respectively. 

\begin{figure}[h]
\begin{center}
\includegraphics[bb=150 575 390 710, clip]{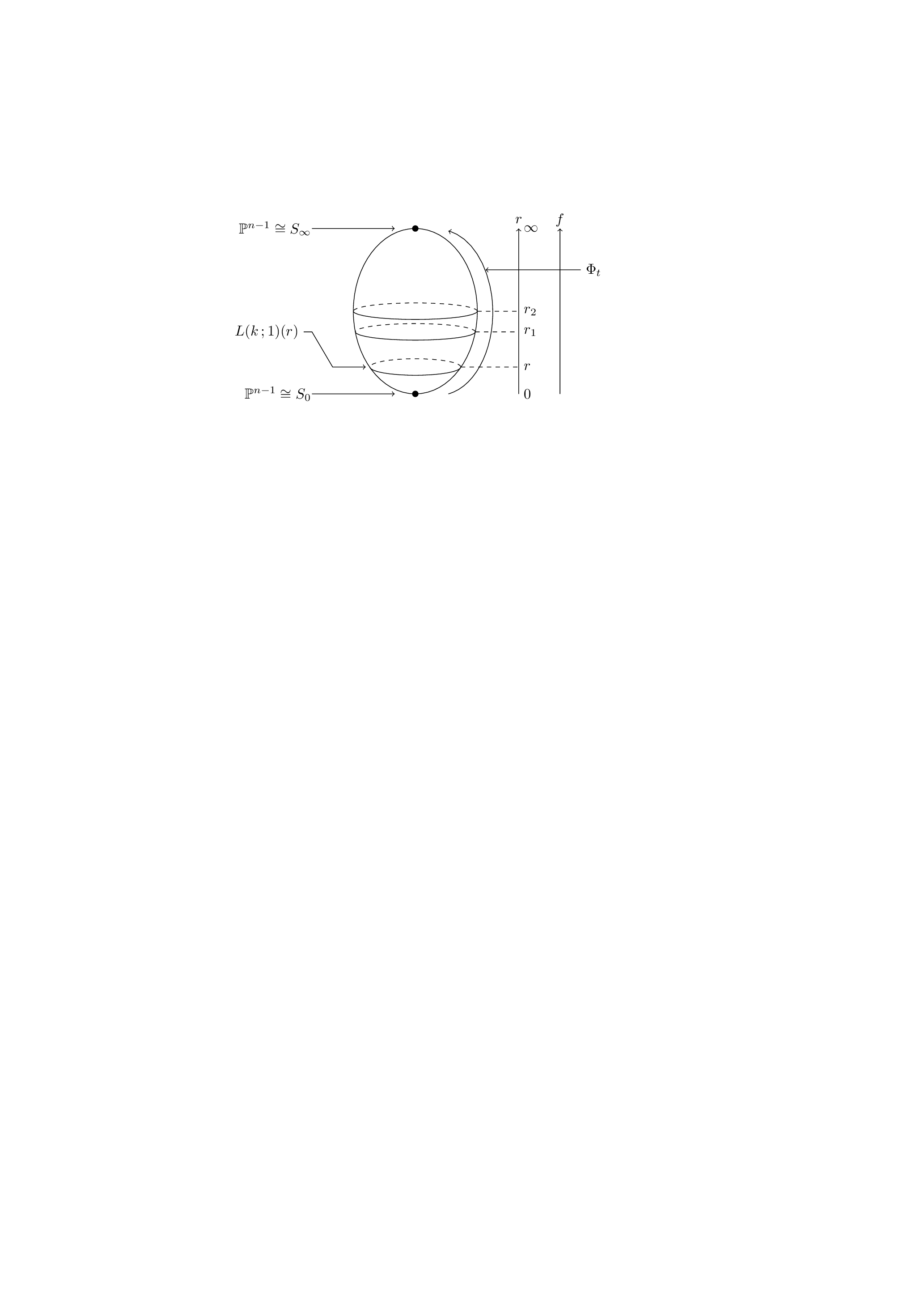}
\end{center}
\caption{$N_{k}^{n}$}
\label{fig1}
\end{figure}

For $0<r<\infty$, we consider the $\mathbb{Z}_{k}$ quotient of $S^{2n-1}(r)$, the sphere with radius $r$ in $\mathbb{C}^{n}$, and denote it by $L(k\,;1)(r)$. 
Actually, it is a lens space and embedded in $N_{k}^{n}$. 
We denote its inclusion map by 
\[\iota_{r}:L(k\,;1)(r)\hookrightarrow N_{k}^{n}. \]
Then, the first theorem states that these are self-similar solutions, 
and whether it is a self-shrinker or self-expander is distinguished by whether its radius is smaller or larger than a critical radius $r_{2}$. 
\begin{theorem}\label{sumofmain1}
For each $0<r<\infty$, the inclusion map $\iota_{r}:L(k\,;1)(r)\hookrightarrow N_{k}^{n}$ is a compact self-similar solution. 
Furthermore, there exists a unique radius $r_{2}$ which satisfies the following. 
\begin{itemize}
\item If $r<r_{2}$, $\iota_{r}:L(k\,;1)(r)\hookrightarrow N_{k}^{n}$ is a non-minimal self-shrinker. 
\item If $r=r_{2}$, $\iota_{r}:L(k\,;1)(r)\hookrightarrow N_{k}^{n}$ is a minimal embedding. 
\item If $r>r_{2}$, $\iota_{r}:L(k\,;1)(r)\hookrightarrow N_{k}^{n}$ is a non-minimal self-expander. 
\end{itemize}
\end{theorem}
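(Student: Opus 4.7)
The plan is to exploit the $U(n)$-symmetry of the Cao--Koiso soliton to reduce the self-similar equation on $L(k\,;1)(r)$ to a scalar identity, and then to study the $r$-dependence of the resulting coefficient.

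First I would observe that under the identification of $(\mathbb{C}^{n}\setminus\{0\})/\mathbb{Z}_{k}$ as an open dense subset of $N_{k}^{n}$, each $L(k\,;1)(r)$ is a principal $U(n)$-orbit, since the standard $U(n)$-action on $\mathbb{C}^{n}$ commutes with the $\mathbb{Z}_{k}$-action and preserves the sphere $\{|z|=r\}$. Consequently $\iota_{r}$ is a closed homogeneous embedding, and in particular is compact. Because $\dim_{\mathbb{R}} L(k\,;1)(r)=2n-1$ while $\dim_{\mathbb{R}} N_{k}^{n}=2n$, its normal bundle is a real line bundle.

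Next, the $U(n)$-invariance of $(g,f)$ implies that both $H_{g}(\iota_{r})$ and $\nabla f$ are $U(n)$-equivariant along the orbit. Since $f$ is a $U(n)$-invariant function on $N_{k}^{n}$, its gradient is a radial vector field, which at points of $L(k\,;1)(r)$ is already orthogonal to the orbit, so $\nabla f^{\perp}=\nabla f$ there. Both $H_{g}(\iota_{r})$ and $\nabla f^{\perp}$ are thus $U(n)$-invariant sections of the one-dimensional normal line bundle, and hence their ratio is a constant $\lambda(r)\in\mathbb{R}$ depending only on $r$. This establishes the self-similar identity $H_{g}(\iota_{r})=\lambda(r)\,\nabla f^{\perp}$ and proves the first assertion.

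To distinguish the three cases, I would compute $\lambda(r)$ explicitly via the cohomogeneity-one reduction of the Cao--Koiso construction. Writing the soliton metric and potential in a radial coordinate $t$ related to $|z|$, both the scalar mean curvature of $\{|z|=r\}$ and the normal derivative of $f$ become explicit expressions in the Cao--Koiso profile function. The equation $\lambda(r)=0$ then reduces to an ordinary equation in $r$, equivalently to the critical-point condition $H_{g}(\iota_{r})=0$ for the volume of the orbits.

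The main obstacle I anticipate is establishing the existence and uniqueness of $r_{2}$. Existence should follow from an intermediate-value argument using the asymptotic behavior of $\lambda(r)$ as $r\to 0^{+}$ (where the orbits collapse onto $S_{0}$) and as $r\to\infty$ (where they approach $S_{\infty}$); in these two limits the mean-curvature and $\nabla f^{\perp}$ contributions should have opposite relative signs, forcing a sign change. Uniqueness, on the other hand, will not be transparent from the Cao--Koiso formulas, since those involve a nontrivial soliton ODE. The key step will be either to rewrite $\lambda(r)$ in a form that makes strict monotonicity in $r$ manifest, or to directly analyze the ODE satisfied by the Cao--Koiso profile to rule out multiple zeroes; I expect this monotonicity argument to be the technical core of the proof.
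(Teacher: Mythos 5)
Your reduction of the first assertion is essentially the paper's: the paper realizes $L(k\,;1)(r)$ as the level set $\{f=P(\log r^2)\}$ of the potential (using $P'=cu''>0$), writes $H(\iota_r)$ via the level-set formula $H=-\frac{\nabla f}{|\nabla f|^2}\mathop{\mathrm{tr}^{\top}}\mathop{\mathrm{Hess}}f$, and invokes transitivity of the $U(n)$-action to conclude that the proportionality factor $\lambda(r)$ is constant on each orbit. Your homogeneous-orbit argument is the same idea in slightly different clothing, and your identification of the existence mechanism for $r_2$ (asymptotics of $\lambda$ as $r\to 0$ and $r\to\infty$ plus the intermediate value theorem) matches the paper: one computes
\[\lambda(r)=\frac{-1}{2cu''(s)}\left(n-u'(s)+cu''(s)+(n-1)\frac{u''(s)}{u'(s)}\right),\quad s=\log r^2,\]
and the asymptotic expansions of $u$ give that the parenthesized quantity tends to $k$ as $s\to-\infty$ and to $-k$ as $s\to+\infty$ while $u''\to 0$, so $\lambda\to-\infty$ and $\lambda\to+\infty$ respectively.

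The genuine gap is the uniqueness of $r_2$, which you explicitly defer ("the key step will be either to rewrite $\lambda(r)$ in a form that makes strict monotonicity in $r$ manifest, or to directly analyze the ODE"). Neither of the two routes you sketch is carried out, and the first one is not what happens: the paper does \emph{not} prove global monotonicity of $\lambda$. The actual device is weaker and more targeted. Setting $\Lambda(s)=\lambda(e^{s/2})$ and differentiating, one substitutes the soliton ODE $\frac{u'''}{u''}+\bigl(\frac{n-1}{u'}-c\bigr)u''=n-u'$ to eliminate $u'''$ and obtains
\[\frac{d\Lambda}{ds}=\frac{1}{2c}+\frac{(n-1)u''}{2c(u')^2}+\frac{1}{c}\Bigl(-c(1+2\Lambda)u'-(n-1)\Bigr)\Bigl(-c\Lambda u'-(n-1)\Bigr)\frac{u''}{(u')^2}.\]
The point is that the last term, which has no sign in general, becomes manifestly nonnegative precisely on the level sets one cares about: at $\Lambda=0$ it equals $\frac{n-1}{c}(cu'+(n-1))\frac{u''}{(u')^2}>0$, and at $\Lambda=-1$ it equals $\frac{1}{c}(cu'-(n-1))^2\frac{u''}{(u')^2}\ge 0$. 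Hence $\Lambda'>0$ wherever $\Lambda\in\{0,-1\}$, so each of these values is attained exactly once; combined with the asymptotics this yields the unique $r_2$ (and the unique $r_1$ needed later). Without this evaluation-on-the-level-set trick, or some substitute for it, your argument establishes existence but not uniqueness of $r_2$.
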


\begin{remark}
Especially, Theorem \ref{mainthm} claims that there exists a compact self-expander in $N_{k}^{n}$. 
To contrast with the case that the ambient space is a Euclidean space, we remark that there exists no compact self-expander in $\mathbb{R}^{n}$. 
It can be proved by several ways, for instance, see Proposition 5.3 in \cite{CaoLi}. 
In $\mathbb{R}^{n}$, the sphere $S^{n-1}(r)$ is a self-shrinker for every radius $r$. 
However, intuitively, we get a self-expander in $(N_{k}^{n},\omega, f)$ by taking the radius sufficiently large because of bending and compactifying 
the neighborhood of $\{\infty\}$ of $(\mathbb{C}^{n}\setminus\{0\})/\mathbb{Z}_{k}$. 
\end{remark}

Fix a time $0<T<\infty$. 
Then, we will check that the 1-parameter family of diffeomorphisms $\Phi_{t}$ generated by $\frac{1}{2(T-t)}\nabla f$ with $\Phi_{0}=\mathrm{id}$ is given by 
\[\Phi_{t}(z):=\left(\frac{T}{T-t}\right)^{\frac{c}{2}}z\]
for $t\in[0,T)$ on $(\mathbb{C}^{n}\setminus\{0\})/\mathbb{Z}_{k}$. 
Here, $c$ is a positive constant defined in the process to construct the soliton structure, and we skip its explanation here. 
Then, we obtain a Ricci flow 
\[g_{t}:=2(T-t)\Phi_{t}^{*}g\]
which survives on the time interval $[0,T)$. 
We remark that, since $c$ is positive, $\Phi_{t}$ is expanding and $\Phi_{t}(z)$ converges to a point in $S_{\infty}$ as $t\to T$ if $z$ is not contained in $S_{0}$. 
See Figure \ref{fig1}. 

For a fixed radius $r$, we take the solution of Ricci-mean curvature flow $F_{t}:L(k\,;1)(r)\to N_{k}^{n}$ along $g_{t}$ with initial condition $F_{0}=\iota_{r}$. 
We assume that $F_{t}$ exists on $[0,T')$ and $T'(=T'(r))$ is the maximal time of existence of the solution. 
Note that $T'\leq T$ in general. 
Then, the following is a summary of Theorem \ref{mainRM}. 
\begin{theorem}\label{sumofmain2}
There exists a unique radius $r_{1}$ with $r_{1}<r_{2}$ which satisfies the following. 
\begin{itemize}
\item $T'=T$ if and only if $r=r_{1}$. 
\item If $r\leq r_{1}$, $F_{t}:L(k\,;1)(r)\to N_{k}^{n}$ collapses to $S_{0}$ as $t\to T'$. 
\item If $r>r_{1}$, $F_{t}:L(k\,;1)(r)\to N_{k}^{n}$ collapses to $S_{\infty}$ as $t\to T'$. 
\end{itemize}
\end{theorem}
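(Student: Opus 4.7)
The plan is to convert the Ricci-mean curvature flow into an autonomous rescaled flow on the fixed soliton background $(N_k^n, g)$. Define $\bar F_t := \Phi_t \circ F_t$. A direct computation using $g_t = 2(T-t)\Phi_t^*g$, diffeomorphism invariance of the mean curvature, and its conformal behavior $H_{cg} = c^{-1}H_g$ under constant scaling shows that, after the logarithmic time change $s = \tfrac{1}{2}\log\bigl(T/(T-t)\bigr)$,
\begin{equation*}
\partial_s \bar F_s = H_g(\bar F_s) + (\nabla f)^\perp\bigl|_{\bar F_s}
\end{equation*}
modulo tangential reparameterization. In these variables the ambient metric is fixed and $T' = T$ is equivalent to $\bar F_s$ being defined for all $s \in [0,\infty)$.

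By the $U(n)$-symmetry of $(N_k^n, g, f)$ and of $\iota_r$, uniqueness of solutions to the Ricci-mean curvature flow forces $\bar F_s$ to remain a $U(n)$-invariant embedded lens space at each time, so $\bar F_s = \iota_{\rho(s)}$ for a scalar radius function $\rho(s)$ with $\rho(0) = r$. Substituting this ansatz and using the explicit $U(n)$-invariant Cao--Koiso data for $g$ and $f$ from Section \ref{caoconst} reduces the rescaled equation to an autonomous scalar ODE for $\rho$ which, reverting to the $t$-variable, is ODE (\ref{ODEradi2}). The asymptotic behaviors $\rho \to 0^+$ and $\rho \to \infty$ correspond to $F_t$ collapsing onto $S_0$ and $S_\infty$ respectively.

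The stationary points of the rescaled ODE are precisely the self-similar solutions with coefficient $\lambda = -1$ in the sense of Definition \ref{defofselsim}. By Theorem \ref{sumofmain1}, $\iota_r$ is self-similar for every $r$ with some coefficient $\lambda(r)$ satisfying $\lambda < 0$ on $(0,r_2)$, $\lambda(r_2) = 0$, and $\lambda > 0$ on $(r_2,\infty)$. Explicitly reading off $\lambda(r)$ from the Cao--Koiso potential, one verifies that $\lambda$ is strictly increasing in $r$ with $\lambda(r) \to -\infty$ as $r \to 0^+$; hence the equation $\lambda(r) = -1$ has a unique root $r_1 \in (0, r_2)$, giving the desired critical radius together with the inequality $r_1 < r_2$.

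Finally, the three cases follow from a sign analysis of the rescaled ODE: its right-hand side is strictly negative on $(0, r_1)$ (where $\lambda(\rho) < -1$ so the drift is inward) and strictly positive on $(r_1, \infty)$ (where the $\nabla f^\perp$ term dominates outward). Thus, if $r = r_1$ then $\rho(s) \equiv r_1$ for all $s \geq 0$, giving $T' = T$; in the original variables $F_t = \Phi_t^{-1}\circ\iota_{r_1}$, and since $\Phi_t^{-1}$ contracts toward the origin as $t \to T$, the image collapses onto $S_0$. If $r < r_1$, then $\rho(s) \to 0$ in finite $s$-time, so $T' < T$ and $F_t \to S_0$. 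If $r > r_1$, then $\rho(s) \to \infty$ in finite $s$-time, so $T' < T$ and $F_t \to S_\infty$. The main technical obstacle is the explicit derivation of the reduced ODE from the Cao--Koiso structure and the verification that $\lambda(r)$ is strictly monotone with the stated limits; these are what power both the uniqueness of $r_1$ and the finite-time escape of $\rho$ on each side.
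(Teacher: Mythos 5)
Your reduction is essentially the paper's own: passing to $\bar F_s=\Phi_t\circ F_t$ with $s=\tfrac{1}{2}\log\bigl(T/(T-t)\bigr)$ is exactly the substitution $R(t)=\kappa(t)h(t)r$ that turns the flow into ODE (\ref{ODEradi2}), the stationary radius is the one with $\lambda=-1$, and the phase-line analysis together with the identification of the limit sets (including the observation that for $r=r_1$ the unrescaled radius $\kappa(t)^{-1}r_1\to 0$, so the collapse is to $S_0$) matches Lemma \ref{mainlemma} and Theorem \ref{mainRM}.

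Two steps, however, do not close as written. First, you base the existence and uniqueness of $r_1$ on the claim that $\lambda$ is strictly increasing on all of $(0,\infty)$. The paper does not prove this, and it is not evident from the explicit formula: with $\Lambda(s)=\lambda(e^{s/2})$ one computes
\[
\frac{d\Lambda}{ds}=\frac{1}{2c}+\frac{(n-1)u''}{2c(u')^2}+\frac{1}{c}\Bigl(-c\bigl(1+2\Lambda\bigr)u'-(n-1)\Bigr)\Bigl(-c\Lambda u'-(n-1)\Bigr)\frac{u''}{(u')^2},
\]
and the last product has no evident sign for general values of $\Lambda$. What is actually proved in Lemma \ref{2radii}, and what suffices, is that this derivative is positive at every $s$ with $\Lambda(s)\in\{-1,0\}$ (there the product reduces to $(n-1)\bigl(cu'+(n-1)\bigr)u''/(u')^2>0$, respectively $\bigl(cu'-(n-1)\bigr)^2u''/(u')^2\ge 0$); combined with $\lambda\to-\infty$ at $0$ and $\lambda\to+\infty$ at $\infty$, this forces each of the levels $-1$ and $0$ to be attained exactly once and gives the sign pattern you use. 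You should replace the global monotonicity claim by this level-set argument. Second, to conclude $T'<T$ for $r\neq r_1$ you need finite $s$-time escape of $\rho$, i.e.\ integrability of $1/\bigl(\rho(\lambda(\rho)+1)\bigr)$ at $0$ and at $\infty$; the bare limits $\lambda\to\mp\infty$ do not give this (a logarithmic divergence of $\lambda$ would yield infinite escape time and hence $T'=T$). The required input is the rate $\lambda(r)=\mathcal{O}(r^{-2k})$ as $r\to 0$ and $\lambda(r)=\mathcal{O}(r^{2k})$ as $r\to\infty$, coming from the asymptotic expansions of $u$ in Section \ref{caoconst}; you allude to this as the main technical obstacle, but it must be stated and used explicitly for the first bullet of the theorem to follow.
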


Theorem \ref{mainRM} contains further information about the blow up rate of the solution. 
Actually, we see that the blow up rate is type I in each case. 
The above theorem reveals how a lens space $L(k\,;1)(r)$ moves and what it converges to by a Ricci-mean curvature flow along the Ricci flow $g_{t}$. 
On the other hand, in section \ref{motionh}, we investigate the evolution of $L(k\,;1)(r)$ by the ordinary mean curvature flow in the fixed Riemannian manifold $(N_{k}^{n},g)$. 
Then, we prove that if $r<r_{2}$ ($r>r_{2}$) it collapses to $S_{0}$ ($S_{\infty}$) in finite time and its blow up rate is also type I. 
Of course, if $r=r_{2}$, $L(k\,;1)(r)$ does not move since $L(k\,;1)(r)$ is minimal. 
Thus, the critical radius $r_{1}$ which determine whether a lens space tends to $S_{0}$-side or $S_{\infty}$-side under the Ricci-mean curvature flow is smaller than 
the minimal radius $r_{2}$. See Figure \ref{fig1}. 
Here we summarize the situation on Table \ref{tab1}. 

\begin{table}[h]
\caption{Ricci-mean curvature flow and mean curvature flow}
\label{tab1}
\begin{tabular}{|c||c|c|c|c|c|c|}
\multicolumn{6}{c}{Ricci-mean curvature flow}\\
\hline
 Radius $r$ & $r<r_{1}$ & $r_{1}$ & $r_{1}<r<r_{2}$ & $r_{2}$ & $r_{2}<r$ \\
\hline
Maximal time $T'$ & $T'<T$ & $T'=T$ & \multicolumn{3}{|c|}{$T'<T$} \\
\hline
Collapse to & \multicolumn{2}{|c|}{$S_{0}$} & \multicolumn{3}{|c|}{$S_{\infty}$} \\
\hline
blow up rate & \multicolumn{5}{|c|}{Type I}\\
 \hline
\multicolumn{6}{c}{}\\
\multicolumn{6}{c}{Mean curvature flow}\\
\hline
 Radius $r$ & $r<r_{1}$ & $r_{1}$ & $r_{1}<r<r_{2}$ & $r_{2}$ & $r_{2}<r$ \\
\hline
Maximal time $T'$ & \multicolumn{3}{|c|}{$T'<\infty$}  & $T'=\infty$ & $T'<\infty$ \\
\hline
Collapse to & \multicolumn{3}{|c|}{$S_{0}$} & --- & $S_{\infty}$ \\
\hline
blow up rate & \multicolumn{3}{|c|}{Type I} & --- & Type I \\
\hline
\end{tabular}
\end{table}
\section{Quick review of Cao's construction}\label{caoconst}
The first example of non-trivial compact gradient shrinking Ricci soliton was found by Koiso \cite{Koiso} and independently by Cao \cite{Cao}, and it is actually a K\"ahler Ricci soliton. 
In this section, we quickly review the construction of it following Section 4 in \cite{Cao} and also Section 7.2 in \cite{Chow}. 
Fix integers $n$, $k$ with $n\geq 2$ and $1\leq k \leq n-1$, and consider the $k$-twisted projective-line bundle 
\[\pi:\mathbb{P}(\mathcal{O}(0)\oplus \mathcal{O}(k))\to \mathbb{P}^{n-1}\]
over $\mathbb{P}^{n-1}$, where $\mathcal{O}(k)$ denotes the $k$-th tensor power of the hyperplane bundle $\mathcal{O}(1)$ over $\mathbb{P}^{n-1}$. 
The gradient shrinking K\"ahler Ricci soliton is constructed on $\mathbb{P}(\mathcal{O}(0)\oplus \mathcal{O}(k))$. 
Let $(z^{1}:\cdots:z^{n})$ be the homogeneous coordinates on $\mathbb{P}^{n-1}$, and put $U_{j}:=\{\,z^{j}\neq 0\,\}\subset\mathbb{P}^{n}$ for $j=1,\dots ,n$. 
Then $\{U_{1},\dots,U_{n}\}$ gives an open covering of $\mathbb{P}^{n-1}$, and the transition functions of $\mathcal{O}(k)$ are given by 
\begin{align}\label{trans}
y^{i}={\left(\frac{z^{i}}{z^{j}}\right)}^{k}y^{j}
\end{align}
over $U_{i}\cap U_{j}$, where $y^{i}\in\mathbb{C}$ is the standard coordinate of a fiber of $\mathcal{O}(k)$ over $U_{i}$. 
For a point $(z^{1},\dots,z^{n}) \in \mathbb{C}^{n}\setminus\{0\}$ with $z^{j}\neq 0$, we define 
\[\psi(z^{1},\dots,z^{n}):=((z^{1}:\cdots:z^{n}),(1:(z^{j})^{k}))\in U_{j}\times\mathbb{P}^1 \cong \pi^{-1}(U_{j}). \]
This definition is compatible for $(z^{1},\dots,z^{n}) \in \mathbb{C}^{n}\setminus\{0\}$ with $z^{i}\neq 0$ and $z^{j}\neq 0$ since 
$(1:(z^{j})^{k})\in \mathbb{P}(\mathbb{C}\oplus\mathbb{C})$ in the fiber over $U_{j}$ is identified with 
$(1:(z^{i})^{k})\in \mathbb{P}(\mathbb{C}\oplus\mathbb{C})$ in the fiber in $U_{i}$ by the relation (\ref{trans}). 
Hence we have a smooth map 
 \[\psi:\mathbb{C}^{n}\setminus\{0\}\to \mathbb{P}(\mathcal{O}(0)\oplus \mathcal{O}(k)). \]
It is clear that $\psi(z^{1},\dots,z^{n})=\psi(z'^{1},\dots,z'^{n})$ if and only if 
$z'=e^{2\pi i\frac{\ell}{k}} z$ for some $\ell\in\mathbb{Z}$. 
Hence $\psi$ induces an open dense embedding 
\[\hat{\psi}:(\mathbb{C}^{n}\setminus\{0\})/\mathbb{Z}_{k} \hookrightarrow \mathbb{P}(\mathcal{O}(0)\oplus \mathcal{O}(k)), \]
where the $\mathbb{Z}_{k}$-action on $\mathbb{C}^{n}\setminus\{0\}$ is defined by $([\ell],z)\mapsto e^{2\pi i\frac{\ell}{k}} z$, 
and the complement of the image of $\hat{\psi}$ is $S_{0}\sqcup S_{\infty}$, where $S_{0}$ and $S_{\infty}$ denote the image of $0$-section and $\infty$-section of $\pi$, respectively. 

From now on, we denote $\mathbb{P}(\mathcal{O}(0)\oplus \mathcal{O}(k))$ by $N_{k}^{n}$, and we identify $(\mathbb{C}^{n}\setminus\{0\})/\mathbb{Z}_{k}$ with its image of $\hat{\psi}$. 
Thus, we have an open dense subset
\[(\mathbb{C}^{n}\setminus\{0\})/\mathbb{Z}_{k}\subset N_{k}^{n}. \]
The K\"ahler Ricci soliton structure is constructed on $(\mathbb{C}^{n}\setminus\{0\})/\mathbb{Z}_{k}$, and it actually extends smoothly to $S_{0}$ and $S_{\infty}$. 
Let $u:\mathbb{R}\to\mathbb{R}$ be a smooth function which satisfies 
\begin{align}\label{udash}
u'(s)>0\quad\mathrm{and}\quad u''(s)>0, 
\end{align}
and has the following asymptotic expansions 
\begin{align}\label{uasy}
\begin{aligned}
u(s)=&(n-k)s+a_{1}e^{ks}+a_{2}e^{2ks}+\cdots \quad(s\to-\infty)\\
u(s)=&(n+k)s+b_{1}e^{-ks}+b_{2}e^{-2ks}+\cdots \quad(s\to\infty)\\
\end{aligned}
\end{align}
with $a_{1}>0$ and $b_{1}>0$. 
Define a $U(n)$-invariant smooth function $\Phi:\mathbb{C}^{n}\setminus\{0\}\to\mathbb{R}$ by
\[\Phi(z):=u(s)\quad\mathrm{with}\quad s=\log |z|^2. \] 
Since $\Phi$ is $\mathbb{Z}_{k}$-invariant, it induce a smooth function on $(\mathbb{C}^{n}\setminus\{0\})/\mathbb{Z}_{k}$, and 
we continue to denote it by $\Phi$. 
By the positivity conditions (\ref{udash}), we get a K\"ahler form 
\begin{align}\label{omega}
\omega=\sqrt{-1}\frac{\partial^2 \Phi}{\partial z^{\alpha}\partial \bar{z}^{\beta}}dz^{\alpha}\wedge d\bar{z}^{\beta}
\end{align}
on $(\mathbb{C}^{n}\setminus\{0\})/\mathbb{Z}_{k}$, where $(z^{1},\dots,z^{n})$ is the ($k$-to-one) global holomorphic coordinates. 
By the asymptotic conditions (\ref{uasy}), K\"ahler form $\omega$ extends smoothly to $S_{0}$ and $S_{\infty}$, 
and we get a global K\"ahler structure on $N_{k}^{n}$. 
The Ricci form of $\omega$ is 
\[\mathop{\mathrm{Ric}}(\omega)=-\sqrt{-1}\frac{\partial^2 \log\det(g)}{\partial z^{\alpha}\partial \bar{z}^{\beta}}dz^{\alpha}\wedge d\bar{z}^{\beta}, \]
where $g=(g_{\alpha\bar{\beta}})$ is a matrix given by 
\begin{align}\label{g}
g_{\alpha\bar{\beta}}=\frac{\partial^2 \Phi}{\partial z^{\alpha}\partial \bar{z}^{\beta}}=e^{-s}u'(s)\delta_{\alpha\bar{\beta}}+e^{-2s}\bar{z}^{\alpha}z^{\beta}(u''(s)-u'(s))
\end{align}
for $s=\log|z|^2$. One can easily check that
\begin{align*}
g^{\alpha\bar{\beta}}(z)=&\frac{e^{s}}{u'(s)}\delta^{\alpha\bar{\beta}}+z^{\alpha}\bar{z}^{\beta}\left(\frac{1}{u''(s)}-\frac{1}{u'(s)}\right), \\
\det(g(z))=&e^{-ns}(u'(s))^{n-1}u''(s). 
\end{align*}
Define a real valued smooth function $P:\mathbb{R}\to\mathbb{R}$ by 
\begin{align}\label{f}
\begin{aligned}
P(s):=&\log\left(e^{-ns}(u'(s))^{n-1}u''(s)\right)+u(s)\\
=&-ns+(n-1)\log u'(s)+\log u''(s)+u(s), 
\end{aligned}
\end{align}
and a $U(n)$-invariant real valued smooth function $f:\mathbb{C}^{n}\setminus\{0\}\to\mathbb{R}$ by 
\begin{align}\label{F}
f(z):=P(s)=\log\det(g(z))+\Phi(z) \quad\mathrm{with}\quad s=\log |z|^2.
\end{align}
Since $f$ is $\mathbb{Z}_{k}$-invariant, it induce a smooth function on $(\mathbb{C}^{n}\setminus\{0\})/\mathbb{Z}_{k}$, and 
we continue to denote it by $f$. 
Then, we have 
\[\mathop{\mathrm{Ric}}(\omega)+\sqrt{-1}\partial\bar{\partial}f=\omega. \]
This equation is just the (1,1)-part of the gradient shrinking Ricci soliton equation 
\begin{align}\label{ksoliton}
\mathop{\mathrm{Ric}}+\mathop{\mathrm{Hess}}f=g, 
\end{align}
where $g$ is the associated Riemannian metric of $\omega$ and $\mathop{\mathrm{Ric}}$ is the Ricci 2-tensor of $g$. 
Thus, the property that $f$ satisfies (\ref{ksoliton}) is equivalent to that $\nabla f$ is a holomorphic vector field. 
The coefficient of $\partial/\partial z^{\alpha}$ of $\nabla f$ is given by 
\begin{align*}
g^{\alpha\bar{\beta}}\frac{\partial f}{\partial{\bar{z}}^{\beta}}=g^{\alpha\bar{\beta}}\left(P'(s)e^{-s}z^{\beta}\right)=\frac{P'(s)}{u''(s)}z^{\alpha}. 
\end{align*}
Hence, $\nabla f$ is holomorphic if and only if 
\begin{align}\label{c1}
\frac{P'(s)}{u''(s)}=c
\end{align}
for some constant $c\in\mathbb{R}$. 
Substituting (\ref{f}) into (\ref{c1}), we have the following third order ODE: 
\begin{align}\label{ODE}
\frac{u'''}{u''}+\left(\frac{n-1}{u'}-c\right)u''=n-u'. 
\end{align}
Hence, we get a $U(n)$-invariant gradient shrinking K\"ahler Ricci soliton structure on $N^{n}_{k}$ when we find a solution $u$ of (\ref{ODE}) which satisfies 
condition (\ref{udash}) and (\ref{uasy}) for some $c\in\mathbb{R}$. 
Then, Cao \cite{Cao} proved the following. 
\begin{theorem}[\cite{Cao}]\label{caothm}
There exists one and only one pair $(u,c)$ so that $u$ and $c$ satisfies (\ref{udash}), (\ref{uasy}) and (\ref{ODE}). 
Additionally, it follows that $0<c<1$. 
\end{theorem}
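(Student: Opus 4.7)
The plan is to reduce the third-order ODE (\ref{ODE}) for $u$ to a first-order linear ODE for $w := u''$ regarded as a function of $\phi := u'$. Since $u''' = w\,dw/d\phi$, equation (\ref{ODE}) becomes
\begin{equation*}
\frac{dw}{d\phi}+\left(\frac{n-1}{\phi}-c\right)w = n - \phi,
\end{equation*}
and the conditions (\ref{udash}), (\ref{uasy}) translate into requiring $w > 0$ on the open interval $(n-k, n+k)$ with simple zeros at both endpoints. Multiplying through by the integrating factor $\phi^{n-1}e^{-c\phi}$ and imposing $w(n-k) = 0$ uniquely determines
\begin{equation*}
w(\phi) = \phi^{1-n} e^{c\phi}\int_{n-k}^{\phi}(n-\xi)\xi^{n-1}e^{-c\xi}\,d\xi,
\end{equation*}
so the remaining vanishing condition $w(n+k) = 0$ reduces the entire problem to finding $c \in (0,1)$ with $F(c) = 0$, where $F(c) := \int_{n-k}^{n+k}(n-\xi)\xi^{n-1}e^{-c\xi}\,d\xi$.

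For existence of a root $c\in(0,1)$, I would compute $F$ at the endpoints. A direct calculation gives $F(0) = \frac{1}{n+1}[(1-k)(n+k)^n - (1+k)(n-k)^n]$, which is strictly negative since $1\leq k\leq n-1$. Integration by parts using $\frac{d}{d\xi}(\xi^n e^{-\xi}) = (n-\xi)\xi^{n-1}e^{-\xi}$ yields $F(1) = (n+k)^n e^{-(n+k)} - (n-k)^n e^{-(n-k)}$, which is strictly positive because the elementary inequality $\log\frac{1+x}{1-x}>2x$ applied at $x=k/n\in(0,1)$ rearranges to $n\log\frac{n+k}{n-k}>2k$. The intermediate value theorem then produces $c \in (0,1)$ with $F(c) = 0$.

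For uniqueness (and the bound $c<1$), the key observation is the algebraic identity $-\xi(n-\xi) = -n(n-\xi) + (n-\xi)^2$, which rewrites
\begin{equation*}
F'(c) = -nF(c)+\int_{n-k}^{n+k}(n-\xi)^2\xi^{n-1}e^{-c\xi}\,d\xi,
\end{equation*}
and shows $F'(c) > 0$ at every zero of $F$. If two zeros $c_1 < c_2$ existed, then $F$ would have to leave $0$ with positive slope at $c_1$ and return to $0$ at $c_2$, forcing $F'(c_2) \leq 0$ and contradicting the previous strict positivity. Hence $F$ has at most one zero in $\mathbb{R}$, so the solution $c$ is unique and lies in $(0,1)$.

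With $c$ fixed, $w$ is automatically positive on $(n-k, n+k)$ (the defining integral increases from $0$ on $(n-k, n)$, decreases on $(n, n+k)$, and vanishes at $n+k$), with simple zeros $w'(n-k) = k$ and $w'(n+k) = -k$. Integrating $ds = d\phi/w(\phi)$ then gives a diffeomorphism $\phi:\mathbb{R}\to(n-k, n+k)$ with $\phi(s) \to n\mp k$ as $s\to\mp\infty$ (logarithmic blow-up of the integral at the simple zeros of $w$); any antiderivative $u$ of $\phi$ solves (\ref{ODE}) and satisfies (\ref{udash}), and the expansions (\ref{uasy}) follow by Taylor-expanding $w$ about $n\pm k$ and integrating term by term, which automatically yields $a_1, b_1 > 0$. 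I expect the main obstacle to be the uniqueness step: without the precise identity $-\xi(n-\xi) = -n(n-\xi) + (n-\xi)^2$ forcing $F'$ to be positive at every zero of $F$, one would be reduced to a delicate global analysis of the monotonicity of $F$; with the identity in hand, every other step is a routine ODE/calculus verification.
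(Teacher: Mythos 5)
The paper itself does not prove this statement: it is imported verbatim from \cite{Cao} (see also Section 7.2 of \cite{Chow}), and the argument there proceeds by exactly the reduction you use, namely setting $\phi=u'$, $w=u''$ to turn (\ref{ODE}) into the first-order linear equation whose integrating factor is $\phi^{n-1}e^{-c\phi}$, so that the boundary conditions collapse to the single scalar equation $F(c)=0$. Your reconstruction is correct in all essentials --- the endpoint values $F(0)=\frac{1}{n+1}\bigl[(1-k)(n+k)^{n}-(1+k)(n-k)^{n}\bigr]<0$ and $F(1)=(n+k)^{n}e^{-(n+k)}-(n-k)^{n}e^{-(n-k)}>0$ are right, and the identity $-\xi(n-\xi)=-n(n-\xi)+(n-\xi)^{2}$ does force $F'>0$ at every zero of $F$, which rules out a second root. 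The one point to tighten is that uniqueness of $c$ determines $w(\phi)$, hence $u$ only up to a translation in $s$ and an additive constant; you should say explicitly that the absence of constant terms in both expansions of (\ref{uasy}) uses up exactly these two remaining degrees of freedom, so that the pair $(u,c)$ is genuinely unique as claimed.
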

Thus, there exists a unique $U(n)$-invariant gradient shrinking K\"ahler Ricci soliton structure on $N_{k}^{n}$, and K\"ahler form $\omega$ and potential function $f$ are written as (\ref{omega}) 
and (\ref{F}) on $(\mathbb{C}^{n}\setminus\{0\})/\mathbb{Z}_{k}$, respectively. 
\section{Lens spaces in $N^{n}_{k}$ as self-similar solutions}\label{lenssp}
In this section we see that a lens space $L(k\,;1)(r)$ with radius $r$ is embedded in $(N_{k}^{n},\omega,f)$ as a self-similar solution, 
and whether it is a self-shrinker or self-expander is determined by its radius $r$. 
Actually, we prove that there exists the specific radius $r_{2}$ such that 
$L(k\,;1)(r)$ is a self-shrinker or self-expander if $r<r_{2}$ or $r>r_{2}$, respectively. 

Let $p$, $q_{1},\dots,q_{n}$ be integers such that $q_{i}$ are coprime to $p$, and $r$ be a positive constant. 
Then, the lens space $L(p\,;q_{1},\dots,q_{n})(r)$ with radius $r$ is the quotient of $S^{2n-1}(r) \subset \mathbb{C}^{n}$, the sphere with radius $r$, 
by the free $\mathbb{Z}_{p}$-action defined by 
\[[\ell]\cdot (z^{1},\dots,z^{n}):=(e^{2\pi i\ell\frac{q_{1}}{p}}z^{1},\dots,e^{2\pi i\ell\frac{q_{n}}{p}}z^{n}). \]
We restrict ourselves to the case that given integers $n$ and $k$ satisfy $n\geq 2$ and $1\leq k \leq n-1$. 
We write 
\[L(k\,;1)(r):=L(k\,;\overbrace{1,\dots,1}^{n})(r), \]
for short. 
It is clear that $L(k\,;1)(r)$ is embedded in $(\mathbb{C}^{n}\setminus\{0\})/\mathbb{Z}_{k}$, and 
$U(n)$ acts on $L(k\,;1)(r)$ transitively, since $\mathbb{Z}_{k}$-action defined by
\[[\ell]\cdot (z^{1},\dots,z^{n}):=(e^{2\pi i\ell\frac{1}{k}}z^{1},\dots,e^{2\pi i\ell\frac{1}{k}}z^{n})\]
and $U(n)$-action commute. 

Let $(N_{k}^{n},\omega)$ be the unique $U(n)$-invariant gradient shrinking K\"ahler Ricci soliton with potential function $f$ given in Theorem \ref{caothm}. 
As explained in Section \ref{caoconst}, we have an open dense subset 
\[(\mathbb{C}^{n}\setminus\{0\})/\mathbb{Z}_{k} \subset N_{k}^{n}. \]
Via this identification, we embed $L(k\,;1)(r)$ into $N_{k}^{n}$, and denote its inclusion map by 
\[\iota_{r}:L(k\,;1)(r) \hookrightarrow N_{k}^{n}. \]
Actually, $L(k\,;1)(r)$ is given as a level set of potential function $f$. 
\begin{lemma}
We have 
\[L(k\,;1)(r)=\{\,f=\gamma\,\}, \]
where $\gamma:=P(\log r^2)$ and $P$ is given by (\ref{f}). 
\end{lemma}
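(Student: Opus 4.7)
The plan is to combine two ingredients: the explicit formula $f(z)=P(\log|z|^2)$ on $(\mathbb{C}^n\setminus\{0\})/\mathbb{Z}_k$ recorded in Section \ref{caoconst}, and the strict monotonicity of the single-variable function $P$, which together pin down $|z|$ from the value of $f$.

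First I would dispatch the easy inclusion $L(k\,;1)(r)\subseteq\{f=\gamma\}$: any representative $z\in S^{2n-1}(r)\subset\mathbb{C}^n\setminus\{0\}$ has $\log|z|^2=\log r^2$, so $f(z)=P(\log r^2)=\gamma$, and this descends to the $\mathbb{Z}_k$-quotient.

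The main step is the reverse inclusion, for which I would prove that $P$ is strictly increasing on $\mathbb{R}$. Reading (\ref{c1}) as $P'(s)=c\,u''(s)$ and invoking $c>0$ (Theorem \ref{caothm}) together with $u''(s)>0$ (equation (\ref{udash})) gives $P'>0$. Injectivity of $P$ then turns $f(z)=\gamma$ into $\log|z|^2=\log r^2$, so $|z|=r$ and $z$ lies in $L(k\,;1)(r)$.

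It remains to verify that $\{f=\gamma\}$ picks up no points on $S_0$ or $S_\infty$. Because $f$ extends smoothly to the compact manifold $N_k^n$ and is $U(n)$-invariant (hence constant on each of $S_0,S_\infty$), the values of $f$ on these divisors are the finite limits $P(-\infty)$ and $P(+\infty)$; strict monotonicity of $P$ then gives $P(-\infty)<\gamma<P(+\infty)$, so the level set is disjoint from $S_0\cup S_\infty$. I do not anticipate a real obstacle here: the identity is essentially built into the construction of $f$, and the only nontrivial input — monotonicity of $P$ — falls straight out of the soliton ODE via (\ref{c1}).
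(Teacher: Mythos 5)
Your proposal is correct and follows essentially the same route as the paper: the forward inclusion is immediate from $f(z)=P(\log|z|^2)$, and the reverse inclusion rests on the strict monotonicity $P'=cu''>0$ coming from (\ref{c1}), (\ref{udash}) and $c>0$. Your extra check that the level set misses $S_0$ and $S_\infty$ (via the finite limits $P(\mp\infty)$ and monotonicity) is a point the paper leaves implicit, and it is a welcome bit of completeness rather than a deviation.
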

\begin{proof}
It is clear that $L(k\,;1)(r)$ is contained in $\{\,f=\gamma\,\}$ by a relation $f(z)=P(s)$ with $s=\log|z|^2$. 
To show the converse inclusion, it is sufficient to see that $P$ is strictly increasing. 
This is true since $P'>0$ by the equality (\ref{c1}), the positivity condition (\ref{udash}) and a fact that $0<c<1$ stated in Theorem \ref{caothm}. 
\end{proof}

Since $L(k\,;1)(r)$ is a level set of $f$, the second fundamental form $A$ and the mean curvature vector field $H$ of $\iota_{r}:L(k\,;1)(r) \hookrightarrow N_{k}^{n} $ are given by
\begin{align}\label{AH}
A(\iota_{r})=-\frac{\nabla f}{|\nabla f|^2}\mathop{\mathrm{Hess}}f \quad\mathrm{and}\quad H(\iota_{r})=-\frac{\nabla f}{|\nabla f|^2}\mathop{\mathrm{tr}^{\top}}\mathop{\mathrm{Hess}}f,
\end{align}
where $\nabla f$ and $\mathop{\mathrm{Hess}}f$ is the gradient and the Hessian of $f$ with respect to the ambient Riemannian metric $g$, 
and $\mathrm{tr}^{\top}$ is the trace restricted on $T_{p}L(k\,;1)(r)$ at each point $p$ in $L(k\,;1)(r)$. 
Since $L(k\,;1)(r)$ and the K\"ahler structure on $N_{k}^{n}$ are invariant under $U(n)$-action and it acts transitively on $L(k\,;1)(r)$, a function 
\[-\frac{1}{|\nabla f|^2}\mathop{\mathrm{tr}^{\top}}(\mathop{\mathrm{Hess}}f)\]
on $L(k\,;1)(r)$ is actually a constant, and we denote the constant by $\lambda(r)$. 
Thus, the embedding $\iota_{r}:L(k\,;1)(r) \hookrightarrow N_{k}^{n}$ is a self-similar solution with 
\[H(\iota_{r})=\lambda(r){\nabla f}^{\bot}. \]
Here we used that $\nabla f$ is normal to $L(k\,;1)(r)$, that is, ${\nabla f}^{\bot}=\nabla f$ actually. 
The reminder is to determine the sign of $\lambda(r)$. By the $U(n)$-invariance, it suffices to compute 
\[-\frac{1}{|\nabla f|^2}\mathop{\mathrm{tr}^{\top}}(\mathop{\mathrm{Hess}}f)\]
at a point $p=(r,0\dots,0)$ in $L(k\,;1)(r)$. Put $s:=\log r^2$ and 
\[v_{1}:=\frac{e^{\frac{s}{2}}}{\sqrt{2u''(s)}}\frac{\partial}{\partial y^{1}}=\sqrt{-1}\frac{e^{\frac{s}{2}}}{\sqrt{2u''(s)}}\left(\frac{\partial}{\partial z^{1}}-\frac{\partial}{\partial \bar{z}^{1}}\right). \]
Furthermore, put 
\begin{align*}
w_{\alpha}:=&\frac{e^{\frac{s}{2}}}{\sqrt{2u'(s)}}\frac{\partial}{\partial x^{\alpha}}=\frac{e^{\frac{s}{2}}}{\sqrt{2u'(s)}}\left(\frac{\partial}{\partial z^{\alpha}}+\frac{\partial}{\partial \bar{z}^{\alpha}}\right)\\
Jw_{\alpha}:=&\frac{e^{\frac{s}{2}}}{\sqrt{2u'(s)}}\frac{\partial}{\partial y^{\alpha}}=\sqrt{-1}\frac{e^{\frac{s}{2}}}{\sqrt{2u'(s)}}\left(\frac{\partial}{\partial z^{\alpha}}-\frac{\partial}{\partial \bar{z}^{\alpha}}\right), 
\end{align*}
for $\alpha=2,\dots,n$. Then, by (\ref{g}), one can check that $\{\, v_{1},w_{2},Jw_{2},\dots, w_{n},Jw_{n}, \}$ is an orthonormal basis of $T_{p}L(k\,;1)(r)$ at $p=(r,0\dots,0)$. 
Here, we have
\begin{align}\label{hesses}
\begin{aligned}
\mathop{\mathrm{Hess}}f(v_{1},v_{1})=&\frac{e^{s}}{u''(s)}\frac{\partial^2 f}{\partial z^{1}\partial \bar{z}^{1}}(p)=\frac{P''(s)}{u''(s)}\\
\mathop{\mathrm{Hess}}f(w_{\alpha},w_{\alpha})=&\mathop{\mathrm{Hess}}f(Jw_{\alpha},Jw_{\alpha})= \frac{e^{s}}{u'(s)}\frac{\partial^2 f}{\partial z^{\alpha}\partial \bar{z}^{\alpha}}(p)=\frac{P'(s)}{u'(s)}. 
\end{aligned}
\end{align}
Thus, we have 
\begin{align*}
\mathop{\mathrm{tr}^{\top}}\mathop{\mathrm{Hess}}f=&\mathop{\mathrm{Hess}}f(v_{1},v_{1})
+\sum_{\alpha=2}^{n}\mathop{\mathrm{Hess}}f(w_{\alpha},w_{\alpha})+\sum_{\alpha=2}^{n}\mathop{\mathrm{Hess}}f(Jw_{\alpha},Jw_{\alpha})\\
=&\frac{P''(s)}{u''(s)}+2(n-1)\frac{P'(s)}{u'(s)}. 
\end{align*}
By $P'=cu''$, we have 
\begin{align*}
\frac{P''}{u''}+2(n-1)\frac{P'}{u'}=&c\frac{u'''}{u''}+2c(n-1)\frac{u''}{u'}\\
=&c\left( \left(\frac{u'''}{u''}+(n-1)\frac{u''}{u'} \right)+(n-1)\frac{u''}{u'}  \right)\\
=&c\left(n-u'+cu''+(n-1)\frac{u''}{u'}\right), 
\end{align*}
where we used ODE (\ref{ODE}) in the last equality. 
Furthermore, It is clear that 
\begin{align}\label{nab}
\nabla f=ce^{\frac{s}{2}}\frac{\partial}{\partial x^{1}} \quad\mathrm{and}\quad |\nabla f|^2=2c^2u''(s).
\end{align}
at $p=(r,0,\dots,0)\in L(k\,;1)(r)$. 
Thus, we have 
\[\lambda(r)=\frac{-1}{2cu''(s)}\left( n-u'(s)+cu''(s)+(n-1)\frac{u''(s)}{u'(s)} \right), \]
where $s=\log r^2$. 

To capture the behavior of $\lambda(r)$, we need the following lemma. 
The radius $r_{2}$ in the statement (2) of the following lemma is needed to determine whether $L(k\,;1)(r)$ is a self-shrinker or a self-expander, 
and $r_{1}$ determines whether the Ricci-mean curvature flow of $L(k\,;1)(r)$ converges to $S_{0}$ or $S_{\infty}$. 
\begin{lemma}\label{2radii}
\indent
\begin{enumerate}
\item It holds that 
\begin{align*}
&\lambda(r)\to -\infty \quad \mathrm{and} \quad \lambda(r)=\mathcal{O}(r^{-2k}) \quad  \mathrm{as} \quad r\to 0, \\
&\lambda(r)\to \infty \quad \mathrm{and} \quad \lambda(r)=\mathcal{O}(r^{2k}) \quad  \mathrm{as} \quad r\to \infty. 
\end{align*}
\item There exists a unique pair of radii $r_{1}<r_{2}$ which satisfies the following. 
\begin{itemize}
\item $\lambda(r)\in (-\infty,-1)$ for $r\in(0,r_{1})$. 
\item $\lambda(r_{1})=-1$. 
\item $\lambda(r)\in(-1,0)$ for $r\in(r_{1},r_{2})$. 
\item $\lambda(r_{2})=0$. 
\item $\lambda(r)\in(0,\infty)$ for $r\in (r_{2},\infty)$. 
\end{itemize}
\end{enumerate}
\end{lemma}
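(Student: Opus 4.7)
For part (1), the plan is to substitute the asymptotic expansions (\ref{uasy}) of $u$ directly into the explicit formula
\[
\lambda(r)=-\frac{1}{2cu''(s)}\Big(n-u'(s)+cu''(s)+(n-1)\frac{u''(s)}{u'(s)}\Big),\qquad s=\log r^{2},
\]
that was derived just above the statement. Termwise differentiation gives $u'(s)\to n-k$ and $u''(s)\sim a_{1}k^{2}e^{ks}$ as $s\to-\infty$; the parenthesis then tends to $k>0$, the prefactor $(2cu'')^{-1}$ blows up like $(2ca_{1}k^{2})^{-1}e^{-ks}=(2ca_{1}k^{2})^{-1}r^{-2k}$, and both the divergence $\lambda(r)\to-\infty$ and the announced rate $\mathcal{O}(r^{-2k})$ drop out at once. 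The limit $s\to+\infty$ is entirely symmetric: $u'\to n+k$ and $u''\sim b_{1}k^{2}e^{-ks}$ turn the parenthesis into $-k<0$, which yields $\lambda(r)\sim (2cb_{1}k)^{-1}r^{2k}\to+\infty$.

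For part (2), existence of radii $r_{1}$ and $r_{2}$ with $\lambda(r_{1})=-1$ and $\lambda(r_{2})=0$ follows immediately from part (1) together with continuity of $\lambda$, by the intermediate value theorem. To obtain the full claim (uniqueness and the stated sign description on the three intervals) at once, I would prove that $\lambda$ is strictly increasing as a function of $s$, and hence of $r$; the ordering $r_{1}<r_{2}$ is then forced by $-1<0$.

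To establish monotonicity I would differentiate the above formula to obtain
\[
\frac{d\lambda}{ds}=\frac{1}{2c}+\frac{(n-1)u''}{2c(u')^{2}}+\frac{(n-u')\,u'''}{2c(u'')^{2}},
\]
and then eliminate $u'''$ using the Cao ODE (\ref{ODE}) in the form $u'''=u''(n-u')+c(u'')^{2}-(n-1)(u'')^{2}/u'$. Substitution converts $d\lambda/ds$ into a rational expression in $u'$ and $u''$ alone, namely
\[
2c\,\frac{d\lambda}{ds}=1+\frac{(n-u')^{2}}{u''}+\frac{(n-1)u''}{(u')^{2}}-\frac{(n-1)(n-u')}{u'}+c(n-u').
\]
The central algebraic observation is that the middle three summands, placed over common denominator $(u')^{2}u''$, produce the quadratic form $((n-u')u')^{2}-(n-1)\bigl((n-u')u'\bigr)u''+(n-1)(u'')^{2}$ in the variables $(n-u')u'$ and $u''$; completing the square and using $u',u''>0$, the range $u'\in(n-k,n+k)$, the Cao bounds $0<c<1$ and $k\leq n-1$, I would show $d\lambda/ds>0$ throughout.

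The main obstacle is precisely this positivity step: several of the summands change sign depending on whether $u'(s)$ lies above or below $n$, so a term-by-term bound fails. The quadratic form just identified is unconditionally non-negative for small $n$, but for larger $n$ its discriminant forces one to absorb residual negative contributions into the constant $\tfrac{1}{2c}$ and into the ODE-induced piece $\tfrac{1}{2}(n-u')$; carrying out this bookkeeping uniformly over all admissible triples $(n,k,c)$ is the delicate part, and is where the bulk of the work sits.
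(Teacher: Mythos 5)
Your treatment of part (1) is correct and coincides with the paper's: substitute the expansions (\ref{uasy}), note that the bracket $n-u'+cu''+(n-1)u''/u'$ tends to $k$ (resp.\ $-k$) while $u''=\mathcal{O}(e^{ks})$ (resp.\ $\mathcal{O}(e^{-ks})$), and read off the sign and the rate. Your derivative formula in part (2) is also algebraically correct and, after eliminating $u'''$ via (\ref{ODE}), is equivalent to the expression used in the paper. The genuine gap is the final positivity step, which you yourself flag as unresolved: you aim to prove that $\lambda$ is \emph{globally} strictly increasing, reduce this to the positivity of
\[
2c\,\frac{d\Lambda}{ds}=1+c(n-u')+\frac{\bigl((n-u')u'\bigr)^{2}-(n-1)(n-u')u'\,u''+(n-1)(u'')^{2}}{(u')^{2}u''},
\]
and then observe that the quadratic form in the numerator is indefinite once $n\geq 6$ (its discriminant in the variables $(n-u')u'$ and $u''$ is $(n-1)(n-5)$), so the argument would require absorbing negative contributions using quantitative bounds on $u''$ that you do not possess. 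Without that bookkeeping the uniqueness of $r_{1}$ and $r_{2}$ is not established, and it is not even clear that global monotonicity is the right statement to chase.

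The paper sidesteps this entirely: global monotonicity is never proved. It suffices to show that $d\Lambda/ds>0$ at every $s$ with $\Lambda(s)=0$ and at every $s$ with $\Lambda(s)=-1$; together with the limits from part (1), a $C^{1}$ function whose derivative is positive at each crossing of a fixed level can cross that level only once (if it crossed twice, the first return to the level would have non-positive derivative), and the five bullet points then follow. To see the sign on these two level sets, use $P'=cu''$ and (\ref{ODE}) to write $u'''/u''=2\bigl(-c\Lambda u'-(n-1)\bigr)u''/u'$, which recasts the troublesome part of $d\Lambda/ds$ as the product
\[
\frac{1}{c}\Bigl(-c\bigl(1+2\Lambda\bigr)u'-(n-1)\Bigr)\Bigl(-c\Lambda u'-(n-1)\Bigr)\frac{u''}{(u')^{2}}.
\]
At $\Lambda=0$ this equals $\frac{n-1}{c}\bigl(cu'+(n-1)\bigr)u''/(u')^{2}>0$, and at $\Lambda=-1$ it equals $\frac{1}{c}\bigl(cu'-(n-1)\bigr)^{2}u''/(u')^{2}\geq 0$; in both cases $d\Lambda/ds\geq\frac{1}{2c}>0$ with no estimate on $u''$ required. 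Replacing your global claim with this level-set argument closes the gap.
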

\begin{proof}
By the asymptotic conditions (\ref{uasy}), we have 
\begin{align*}
&n-u'(s)+cu''(s)+(n-1)\frac{u''(s)}{u'(s)} \to k  \quad (s\to -\infty)\\
&n-u'(s)+cu''(s)+(n-1)\frac{u''(s)}{u'(s)} \to -k  \quad (s\to \infty), 
\end{align*}
and also have
\begin{align*}
&u''(s) \to 0 \quad \mathrm{and}\quad u''(s)=\mathcal{O}(e^{ks}) \quad (s\to -\infty)\\
&u''(s) \to 0 \quad \mathrm{and}\quad u''(s)=\mathcal{O}(e^{-ks}) \quad (s\to \infty). 
\end{align*}
Thus, we have proved the statement (1). 

To prove the statement (2), we will prove that  the derivative of $\lambda(r)$ at $r$ such that $\lambda(r)=-1$ or $\lambda(r)=0$ is positive. 
Then, combining the statement (1), this implies immediately that $\lambda(r)$ takes the value $-1$ and $0$ only once. 

Define $\Lambda(s)$ by 
\[\Lambda(s):=\lambda(r)=\frac{-1}{2cu''(s)}\left( n-u'(s)+cu''(s)+(n-1)\frac{u''(s)}{u'(s)} \right)\]
with $s=\log r^2$. 
Then, we have
\[\frac{d}{d r}\lambda(r)=2e^{-\frac{s}{2}}\frac{d}{d s}\Lambda(s). \]
Hence, the positivity of $d\lambda/dr$ is equivalent to the positivity of $d\Lambda/ds$. 
By a straightforward computation, we have
\begin{align}\label{eq31}
\begin{aligned}
\frac{d}{d s}\Lambda(s)=&\frac{-1}{2cu''(s)}\biggl(-u''(s)+cu'''(s)\\
&+(n-1)\frac{u'''(s)}{u'(s)}-(n-1)\frac{(u''(s))^2}{(u'(s))^2}\biggr)-\Lambda(s)\frac{u'''(s)}{u''(s)}\\
=&\frac{1}{2c}+\frac{(n-1)u''(s)}{2c(u'(s))^2}\\
&+\frac{1}{2cu'(s)}\biggl(-c\Bigl(1+2\Lambda(s)\Bigr)u'(s)-(n-1)\biggr)\frac{u'''(s)}{u''(s)}. 
\end{aligned}
\end{align}
By ODE (\ref{ODE}), we have
\begin{align}\label{eq32}
\begin{aligned}
\frac{u'''(s)}{u''(s)}=&n-u'(s)-\left(\frac{n-1}{u'(s)}-c\right)u''(s)\\
=&\left(n-u'(s)+cu''(s)+(n-1)\frac{u''(s)}{u'(s)}\right)-2(n-1)\frac{u''(s)}{u'(s)}\\
=&-2cu''(s)\Lambda(s)-2(n-1)\frac{u''(s)}{u'(s)}\\
=&2\Bigl(-c\Lambda(s) u'(s)-(n-1)\Bigr)\frac{u''(s)}{u'(s)}. 
\end{aligned}
\end{align}
Substituting (\ref{eq32}) into (\ref{eq31}), we have
\begin{align*}
\frac{d}{d s}\Lambda(s)=&\frac{1}{2c}+\frac{(n-1)u''(s)}{2c(u'(s))^2}\\
&+\frac{1}{c}\biggl(-c\Bigl(1+2\Lambda(s)\Bigr)u'(s)-(n-1)\biggr)\biggl(-c\Lambda(s) u'(s)-(n-1)\biggr)\frac{u''(s)}{(u'(s))^2}. 
\end{align*}
We remark that since $c, u''>0$, 
\[\frac{1}{2c}+\frac{(n-1)u''(s)}{2c(u'(s))^2}>0. \]
Since $\Lambda(s)\to -\infty$ as $s\to -\infty$ and $\Lambda(s)\to \infty$ as $s\to \infty$, there exists an $s\in\mathbb{R}$ such that $\Lambda(s)=0$, 
and for such $s$ we have
\begin{align*}
&\frac{1}{c}\biggl(-c\Bigl(1+2\Lambda(s)\Bigr)u'(s)-(n-1)\biggr)\biggl(-c\Lambda(s) u'(s)-(n-1)\biggr)\frac{u''(s)}{(u'(s))^2}\\
=&\frac{n-1}{c}\Bigl(cu'(s)+(n-1)\Bigr)\frac{u''(s)}{(u'(s))^2}>0, 
\end{align*}
since $c, u',u''>0$. 
Thus, we have proved that 
\[\frac{d}{ds}\Lambda(s)>\frac{1}{2c}+\frac{(n-1)u''(s)}{2c(u'(s))^2}>0\]
for $s$ such that $\Lambda(s)=0$. 
Similarly, there exists another $s\in\mathbb{R}$ such that $\Lambda(s)=-1$, 
and for such $s$ we have
\begin{align*}
&\frac{1}{c}\biggl(-c\Bigl(1+2\Lambda(s)\Bigr)u'(s)-(n-1)\biggr)\biggl(-c\Lambda(s) u'(s)-(n-1)\biggr)\frac{u''(s)}{(u'(s))^2}\\
=&\frac{1}{c}\Bigl(cu'(s)-(n-1)\Bigr)^2\frac{u''(s)}{(u'(s))^2}\geq 0
\end{align*}
since $c, u''>0$. 
Thus, we have proved that 
\[\frac{d}{ds}\Lambda(s)\geq \frac{1}{2c}+\frac{(n-1)u''(s)}{2c(u'(s))^2}>0\]
for $s$ such that $\Lambda(s)=-1$. 
Consequently, we have proved that 
\[\frac{d}{dr}\lambda(r)>0\]
for $r$ such that $\lambda(r)=0$ or $\lambda(r)=-1$. 
By this property and the statement (1), the statement (2) follows. 
\end{proof}

By Lemma \ref{2radii}, we have proved the following. 
This is the same as Theorem \ref{sumofmain1}. 

\begin{theorem}\label{mainthm}
For every $0<r<\infty$, the embedding
\[\iota_{r}:L(k\,;1)(r) \hookrightarrow N_{k}^{n}\]
is a compact self-similar solution with 
\[H(\iota_{r})=\lambda(r){\nabla f}^{\bot}, \]
and there exists the unique radius $r_{2}$ such that $\iota_{r}:L(k\,;1)(r) \hookrightarrow N_{k}^{n}$ is a non-minimal self-shrinker, minimal submanifold or 
non-minimal self-expander when $r<r_{2}$, $r=r_{2}$ or $r_{2}<r$, respectively. 
\end{theorem}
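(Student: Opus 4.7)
The plan is to assemble Theorem \ref{mainthm} from the preceding discussion, which has already carried out most of the work. The statement has two parts: (i) every $\iota_r$ is a self-similar solution with coefficient $\lambda(r)$, and (ii) the sign of $\lambda(r)$ behaves as described, with a unique zero at some $r_2$. Part (ii) is essentially a corollary of statement (2) of Lemma \ref{2radii}, so I only need to verify (i) rigorously, and then read off (ii).

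For (i), I would first recall that the preceding Lemma shows $L(k\,;1)(r) = \{f = P(\log r^2)\}$, i.e.\ the lens space is a regular level set of the potential function $f$. Because $f$ is smooth on all of $N_k^n$ and the gradient $\nabla f$ is nonzero along $L(k\,;1)(r)$ (by $|\nabla f|^2 = 2c^2 u''(s) > 0$, from (\ref{nab})), the standard formula (\ref{AH}) applies and gives
\[
H(\iota_r) \;=\; -\frac{\nabla f}{|\nabla f|^2}\,\mathop{\mathrm{tr}^\top}\mathop{\mathrm{Hess}} f.
\]
Next I would invoke $U(n)$-invariance: both the K\"ahler structure on $N_k^n$ and the potential $f$ (being $U(n)$-invariant by construction) are preserved, and $U(n)$ acts transitively on $L(k\,;1)(r)$. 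Therefore the smooth scalar function $-|\nabla f|^{-2}\mathop{\mathrm{tr}^\top}\mathop{\mathrm{Hess}} f$ on $L(k\,;1)(r)$ is constant; denote the constant by $\lambda(r)$. Since $\nabla f$ is normal to a regular level set of $f$, we have $(\nabla f)^\perp = \nabla f$ along $L(k\,;1)(r)$, so
\[
H(\iota_r) \;=\; \lambda(r)\,(\nabla f)^\perp,
\]
which is exactly the self-similar solution equation (\ref{self3}). Compactness of $L(k\,;1)(r)$ is immediate since it is a quotient of a sphere by a finite free action.

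For (ii), I would pick the representative point $p = (r, 0, \dots, 0)$ and use the explicit formula
\[
\lambda(r) \;=\; \frac{-1}{2cu''(s)}\!\left(n - u'(s) + cu''(s) + (n-1)\frac{u''(s)}{u'(s)}\right), \qquad s = \log r^2,
\]
derived above from (\ref{hesses}) and (\ref{nab}). Then statement (2) of Lemma \ref{2radii} directly yields a unique $r_2 > 0$ with $\lambda(r_2) = 0$, $\lambda(r) < 0$ for $r < r_2$, and $\lambda(r) > 0$ for $r > r_2$. By Definition \ref{defofselsim} this is precisely the trichotomy "non-minimal self-shrinker / minimal submanifold / non-minimal self-expander" claimed in the theorem.

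The only conceptual work is really in (i): making sure that the $U(n)$-invariance argument legitimately promotes the pointwise identity at one point to the global self-similar equation. This is routine here because $U(n)$ preserves $f$, $g$, and $L(k\,;1)(r)$ simultaneously, so it preserves both sides of the identity. There is no serious obstacle; the theorem is a clean consequence of the formula for $\lambda(r)$ together with Lemma \ref{2radii}.
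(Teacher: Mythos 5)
Your proposal is correct and follows essentially the same route as the paper: the level-set description of $L(k\,;1)(r)$, the formula (\ref{AH}) for $H(\iota_{r})$, the $U(n)$-invariance and transitivity argument giving the constant $\lambda(r)$, the explicit computation at $p=(r,0,\dots,0)$, and finally statement (2) of Lemma \ref{2radii} for the trichotomy in $r_{2}$. Your added remarks (that $\nabla f\neq 0$ along the level set so it is regular, and that compactness follows from the finite free quotient of the sphere) are correct and only make explicit what the paper leaves implicit.
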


For the following sections, here we compute the norm of $A(\iota_{r})$. It is easy to see that $A(\iota_{r})$ is diagonalized by the  orthonormal basis $\{\, v_{1},w_{2},Jw_{2},\dots, w_{n},Jw_{n}, \}$. 
Hence, we have
\[|A(\iota_{r})|^{2}=|A(v_{1},v_{1})|^2+\sum_{\alpha=2}^{n}|A(w_{k},w_{k})|^2+\sum_{\alpha=2}^{n}|A(Jw_{k},Jw_{k})|^2. \]
By (\ref{AH}), (\ref{hesses}) and (\ref{nab}), with $s=\log r^2$, we have
\begin{align*}
|A(\iota_{r})|^{2}=\frac{1}{2c^{2}u''(s)}\left(\left(\frac{P''(s)}{u''(s)}\right)^2+2(n-1)\left(\frac{P'(s)}{u'(s)}\right)^2\right). 
\end{align*}
By $P'=cu''$ and ODE (\ref{ODE}), we have 
\begin{align*}
|A(\iota_{r})|^{2}=\frac{1}{2u''(s)}\left(\left(n-u'(s)-\left(\frac{n-1}{u'(s)}-c\right)u''(s)\right)^2+2(n-1)\left(\frac{u''(s)}{u'(s)}\right)^2\right). 
\end{align*}
By a similar argument of the proof of the statement (1) of Lemma \ref{2radii}, we can prove the following. 
\begin{lemma}\label{rateofA}
It holds that 
\begin{align*}
&|A(\iota_{r})|^{2}\to \infty \quad \mathrm{and} \quad |A(\iota_{r})|^{2}=\mathcal{O}(r^{-2k}) \quad \mathrm{as}\quad r\to 0, \\
&|A(\iota_{r})|^{2}\to \infty \quad \mathrm{and} \quad |A(\iota_{r})|^{2}=\mathcal{O}(r^{2k}) \quad \mathrm{as}\quad r\to \infty. 
\end{align*}
\end{lemma}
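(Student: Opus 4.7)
The plan is to apply the same asymptotic analysis used in part (1) of Lemma \ref{2radii} to the closed-form expression for $|A(\iota_{r})|^{2}$ derived immediately above the statement, namely
\[|A(\iota_{r})|^{2}=\frac{1}{2u''(s)}\left(\left(n-u'(s)-\left(\frac{n-1}{u'(s)}-c\right)u''(s)\right)^{2}+2(n-1)\left(\frac{u''(s)}{u'(s)}\right)^{2}\right),\]
with $s=\log r^{2}$. The strategy is to control the bracketed factor and the prefactor $1/(2u''(s))$ separately, using the asymptotic expansions (\ref{uasy}).

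First, I would record the consequences of (\ref{uasy}) for $u'$ and $u''$: as $s\to-\infty$ one has $u'(s)\to n-k$ and $u''(s)=k^{2}a_{1}e^{ks}+O(e^{2ks})$, while as $s\to\infty$ one has $u'(s)\to n+k$ and $u''(s)=k^{2}b_{1}e^{-ks}+O(e^{-2ks})$. Plugging these into the bracketed factor, the term $(n-1)u''(s)/u'(s)$ tends to $0$ and $cu''(s)$ tends to $0$, so $n-u'(s)-\bigl(\tfrac{n-1}{u'(s)}-c\bigr)u''(s)\to k$ as $s\to-\infty$ and $\to -k$ as $s\to\infty$. In either case its square tends to $k^{2}>0$, and the second summand $2(n-1)(u''/u')^{2}$ tends to $0$. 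Thus the whole bracketed factor tends to the positive constant $k^{2}$ in both regimes.

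It then remains to read off the growth of the prefactor $1/(2u''(s))$. From the expansions, $1/u''(s)=\Theta(e^{-ks})$ as $s\to-\infty$ and $\Theta(e^{ks})$ as $s\to\infty$. Substituting $s=\log r^{2}$ converts these into $\Theta(r^{-2k})$ and $\Theta(r^{2k})$ respectively, which combined with the convergence of the bracketed factor to $k^{2}$ yields both $|A(\iota_{r})|^{2}\to\infty$ and the required order estimates. I do not anticipate a substantive obstacle; the only care needed is to verify that the $O(e^{\pm ks})$ corrections to $u'$ and $u''$ genuinely push the subleading contributions in the bracket below the leading $\pm k$ term, so that the limit of the square is the positive number $k^{2}$ and not zero by cancellation. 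This is immediate from the fact that the limits $n-k$ and $n+k$ of $u'$ differ from $n$ by $\pm k\neq 0$.
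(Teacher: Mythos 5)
Your proposal is correct and follows essentially the same route as the paper, which simply invokes ``a similar argument to the proof of statement (1) of Lemma \ref{2radii}'': both arguments use the asymptotic expansions (\ref{uasy}) to show the bracketed factor tends to $k^{2}>0$ while $u''(s)=\Theta(e^{ks})$ as $s\to-\infty$ and $\Theta(e^{-ks})$ as $s\to\infty$, so that the prefactor $1/(2u''(s))$ supplies the $r^{\mp 2k}$ rates. Your added check that no cancellation occurs in the limit of the square (since $k\geq 1$) is a sensible explicit remark that the paper leaves implicit.
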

\section{The motion by Ricci-mean curvature flow of a lens space in $N^{n}_{k}$}\label{motion}
In this section we observe how a lens space $L(k\,;1)(r)$ in $N^{n}_{k}$ moves by Ricci-mean curvature flow and what it converges to. 
Continuing Section \ref{lenssp}, let $(N_{k}^{n},\omega)$ be a unique $U(n)$-invariant gradient shrinking K\"ahler Ricci soliton with potential function $f$ given in Theorem \ref{caothm}. 
Then we have 
\[\nabla f=cr\frac{\partial}{\partial r}, \]
where $r=|z|$. Fix $T\in (0,\infty)$. One can easily see that 
\[\Phi_{t}:(\mathbb{C}^{n}\setminus\{0\})/\mathbb{Z}_{k}\to (\mathbb{C}^{n}\setminus\{0\})/\mathbb{Z}_{k}\]
defined by 
\[\Phi_{t}(z):=\kappa(t)z\quad\mathrm{with}\quad\kappa(t):=\left(\frac{T}{T-t}\right)^{\frac{{c}}{2}}\]
for $t\in [0,T)$ is the 1-parameter family of automorphisms of $N_{k}^{n}$ generated by $\frac{1}{2(T-t)}\nabla f$ with $\Phi_{0}=\mathrm{id}$. 
Then, it follows that $g_{t}:=2(T-t)\Phi_{t}^{*}g$ satisfies the Ricci flow equation:
\[\frac{\partial}{\partial t}g_{t}=-2\mathrm{Ric}(g_{t}). \]
Fix $r\in(0,\infty)$ and let $\iota_{r}:L(k\,;1)(r) \hookrightarrow N_{k}^{n}$ be a lens space with radius $r$ and 
\[F_{t}:L(k\,;1)(r)\to N_{k}^{n}\quad (t\in[0,T'))\]
be the solution of Ricci-mean curvature flow along $g_{t}=2(T-t)\Phi_{t}^{*}g$ with initial condition $F_{0}=\iota_{r}$. 
We assume that $T'(=T'(r))$ is the maximal time of existence of the solution. 
By rotationally symmetry of $L(k\,;1)(r)\subset N_{k}^{n}$, the solution $F_{t}$ is written as 
\[F_{t}(p):=h(t)p, \]
by some positive smooth function $h:[0,T')\to \mathbb{R}$. 
Then the Ricci-mean curvature flow equation for $F_{t}$ is reduced to an ODE for $h(t)$. 
\begin{proposition}
The 1-parameter family of immersions $F_{t}$ is the solution of the Ricci-mean curvature flow coupled with $g_{t}$ with initial condition $F_{0}=\iota_{r}$ if and only if 
the positive smooth function $h:[0,T')\to \mathbb{R}$ satisfies the following ODE with initial condition:
\begin{align}\label{ODEradi}
\begin{aligned}
 &h(0)=1\\
&\frac{h'(t)}{h(t)}=\frac{c}{2(T-t)}\lambda(\kappa(t)h(t)r), 
\end{aligned}
\end{align}
where $\lambda$ and $\kappa$ are given functions. 
\end{proposition}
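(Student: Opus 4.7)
The strategy is to exploit two invariance properties to reduce the mean curvature computation in the time-dependent metric $g_t = 2(T-t)\Phi_t^{*}g$ to the mean curvature of $\iota_R$ in the fixed metric $g$, which is already known from Theorem \ref{mainthm}. First, the factor $2(T-t)$ is spatially constant, so the conformal rescaling rule for mean curvature vectors under a constant conformal factor (no gradient term appears) gives $H_{g_t}(F_t) = (2(T-t))^{-1}H_{\Phi_t^{*}g}(F_t)$. Second, $\Phi_t\colon(N_k^n,\Phi_t^{*}g)\to(N_k^n,g)$ is an isometry, so the diffeomorphism covariance of the mean curvature vector gives $H_{\Phi_t^{*}g}(F_t) = (\Phi_t^{-1})_{*}H_g(\Phi_t\circ F_t)$.

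Next I would execute the reduction. The composition $\Phi_t\circ F_t$ sends $p$ to $\kappa(t)h(t)p$, so its image is the lens space $L(k\,;1)(R(t))$ with $R(t):=\kappa(t)h(t)r$. Since the mean curvature vector only depends on the image up to diffeomorphism of the source, Theorem \ref{mainthm} together with $\nabla f = cr\,\partial_r$ (and the fact that $\partial_r$ is normal to every lens space) yields
\[
H_g(\Phi_t\circ F_t)(p) = c\,\lambda(R(t))\,R(t)\,\partial_r|_{\kappa(t)h(t)p}.
\]
Since $\Phi_t^{-1}$ is linear scaling by $1/\kappa(t)$, its differential multiplies vectors by $1/\kappa(t)$, and a short check shows $(\Phi_t^{-1})_{*}(\partial_r|_{\kappa(t)h(t)p}) = \kappa(t)^{-1}\partial_r|_{h(t)p}$. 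Combining the previous two steps,
\[
H_{g_t}(F_t)(p) = \frac{c\,\lambda(\kappa(t)h(t)r)\,h(t)\,r}{2(T-t)}\,\partial_r|_{h(t)p}.
\]
On the other hand, $\partial_t F_t(p) = h'(t)\,p = h'(t)\,r\,\partial_r|_{h(t)p}$, so after cancelling the common factor $r\,\partial_r|_{h(t)p}$, the Ricci-mean curvature flow equation $\partial_t F_t = H_{g_t}(F_t)$ is equivalent to $h'(t)/h(t) = c\,\lambda(\kappa(t)h(t)r)/(2(T-t))$. The condition $F_0=\iota_r$ translates to $h(0)=1$, which gives both directions of the claimed equivalence between (\ref{RMCFeq}) (for the ansatz $F_t(p)=h(t)p$) and (\ref{ODEradi}).

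The main obstacle is bookkeeping with the radial vector under the scaling $\Phi_t$: one must distinguish $\partial_r|_{h(t)p}$ from $\partial_r|_{\kappa(t)h(t)p}$, and keep the $\kappa(t)^{-1}$ factor from the pushforward separate from the $R(t)=\kappa(t)h(t)r$ factor coming from $\nabla f$, so that they combine correctly into the expected $h(t)\,r$. Beyond this, the argument is simply the combination of the conformal rescaling rule, the diffeomorphism covariance of $H$ under $\Phi_t$, and the already-established identity $H(\iota_R) = \lambda(R)\,\nabla f^{\perp}$ from Theorem \ref{mainthm}.
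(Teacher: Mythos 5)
Your argument is correct and follows essentially the same route as the paper: the paper's proof simply asserts $H_{g_{t}}(F_{t})=\frac{1}{2(T-t)}H(\iota_{\kappa(t)h(t)r})$ and equates radial components, which is exactly what you derive via the constant conformal rescaling and the isometry $\Phi_{t}:(N_{k}^{n},\Phi_{t}^{*}g)\to(N_{k}^{n},g)$. Your bookkeeping with $\partial_{r}$ at the two base points (versus the paper's use of the scale-invariant field $r\,\partial/\partial r$) checks out and just makes explicit what the paper labels ``easy to see.''
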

\begin{proof}
Recall that the Ricci-mean curvature flow equation is 
\[\frac{\partial}{\partial t}F_{t}=H_{g_{t}}(F_{t}), \]
where $H_{g_{t}}(F_{t})$ is the mean curvature vector field of $F_{t}$ computed with the Riemannian metric $g_{t}=2(T-t)\Phi_{t}^{*}g$. 
It is easy to see that 
\begin{align*}
H_{g_{t}}(F_{t})=\frac{1}{2(T-t)}H(\iota_{\kappa(t)h(t)r})=\frac{c}{2(T-t)}\lambda(\kappa(t)h(t)r)\left(r\frac{\partial}{\partial r}\right), 
\end{align*}
where $H$ without subscript $g_{t}$ denotes the mean curvature vector field with respect to the original ambient metric $g$. 
Since 
\[\frac{\partial}{\partial t}F_{t}=\frac{h'(t)}{h(t)}\left(r\frac{\partial}{\partial r}\right), \]
the proposition is proved. 
\end{proof}

Put 
\begin{align}\label{Rh}
R(t):=\kappa(t)h(t)r=\left(\frac{T}{T-t}\right)^{\frac{{c}}{2}}h(t)r. 
\end{align}
Then, we have 
\[\frac{R'(t)}{R(t)}=\frac{c}{2(T-t)}+\frac{h'(t)}{h(t)}. \]
Thus, we have the following. 

\begin{lemma}
The ODE (\ref{ODEradi}) for $h(t)$ with initial condition is equivalent to the following ODE for $R(t)$ with initial condition: 
\begin{align}\label{ODEradi2}
\begin{aligned}
 &R(0)=r\\
&\frac{R'(t)}{R(t)}=\frac{c}{2(T-t)}\Bigl(\lambda(R(t))+1\Bigr). 
\end{aligned}
\end{align}
\end{lemma}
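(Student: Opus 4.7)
The plan is to prove the equivalence directly by taking logarithmic derivatives, exploiting the fact that $R(t)$ is defined as the product $\kappa(t) h(t) r$. This reduces the entire statement to a short manipulation of the ratio $R'/R$ together with a computation of the single quantity $\kappa'/\kappa$.

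First I would compute $\kappa'(t)/\kappa(t)$. Since $\kappa(t) = \bigl(T/(T-t)\bigr)^{c/2}$, taking $\log$ and differentiating yields
\begin{equation*}
\frac{\kappa'(t)}{\kappa(t)} = \frac{c}{2(T-t)}.
\end{equation*}
Also $\kappa(0)=1$, so $R(0) = \kappa(0)h(0)r = r$ is automatic from $h(0)=1$, and conversely $h(0) = R(0)/(\kappa(0)r) = 1$ from $R(0)=r$.

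Next I would use the product definition $R = \kappa h r$ to write
\begin{equation*}
\frac{R'(t)}{R(t)} = \frac{\kappa'(t)}{\kappa(t)} + \frac{h'(t)}{h(t)} = \frac{c}{2(T-t)} + \frac{h'(t)}{h(t)}.
\end{equation*}
Substituting the ODE (\ref{ODEradi}) for $h'/h$ into this identity immediately gives
\begin{equation*}
\frac{R'(t)}{R(t)} = \frac{c}{2(T-t)} + \frac{c}{2(T-t)}\lambda\bigl(\kappa(t)h(t)r\bigr) = \frac{c}{2(T-t)}\bigl(\lambda(R(t))+1\bigr),
\end{equation*}
which is (\ref{ODEradi2}). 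For the converse direction I would define $h(t) := R(t)/(\kappa(t)r)$ from a solution $R(t)$ of (\ref{ODEradi2}) and run the same computation in reverse: the identity $h'/h = R'/R - \kappa'/\kappa$ together with (\ref{ODEradi2}) recovers (\ref{ODEradi}).

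There is no real obstacle here; the only minor thing to keep in mind is that $\kappa, h, R$ are all strictly positive on the interval of existence, so the logarithmic derivatives are well-defined and the correspondence $h \leftrightarrow R$ given by $R = \kappa h r$ is a smooth bijection between positive solutions. Everything else is a direct algebraic substitution using the already-computed expression for $\kappa'/\kappa$.
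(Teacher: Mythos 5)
Your proposal is correct and follows essentially the same route as the paper: the paper likewise sets $R=\kappa h r$, computes $R'/R = \frac{c}{2(T-t)} + h'/h$ from $\kappa'/\kappa = \frac{c}{2(T-t)}$, and reads off the equivalence. Your write-up merely makes explicit the converse direction and the positivity remarks that the paper leaves implicit.
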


Therefore, analysis of the motion of Ricci-mean curvature flow emanating from $L(k\, ;1)(r)$ is reduced to the analysis of $R(t)$.
Let $r_{1}$ be the specific radius introduced in Lemma \ref{2radii}. Then, we have the following. 

\begin{lemma}\label{mainlemma}
\indent
\begin{enumerate}
\item If $r<r_{1}$, then $T'<T$ and the solution $R(t)$ of (\ref{ODEradi2}) satisfies $R(t)\to 0$ and $h(t)\to 0$ as $t\to T'$. 
Furthermore, we have 
\[(R(t))^{-2k}=\mathcal{O}\left(\frac{1}{T'-t}\right) \quad \mathrm{as} \quad t\to T'. \]
\item If $r=r_{1}$, then $T'=T$ and $R(t)=r_{1}$ is the stationary solution of (\ref{ODEradi2}) and $h(t)=\kappa^{-1}(t)\to 0$ as $t \to T$. 
\item If $r_{1}<r$, then $T'<T$ and the solution $R(t)$ of (\ref{ODEradi2}) satisfies $R(t)\to \infty$ and $h(t)\to \infty$ as $t\to T'$. 
Furthermore, we have 
\[(R(t))^{2k}=\mathcal{O}\left(\frac{1}{T'-t}\right) \quad \mathrm{as} \quad t\to T'. \]
\end{enumerate}
\end{lemma}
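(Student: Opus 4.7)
The plan is to analyze the ODE (\ref{ODEradi2}) directly and read off the behaviour of $h(t)$ from the identity $h(t)=R(t)/(\kappa(t)r)$. The most convenient device is to switch to the rescaled time variable $\sigma:=\log(T/(T-t))$, which sweeps out $[0,\infty)$ as $t$ ranges over $[0,T)$ and turns (\ref{ODEradi2}) into the autonomous ODE
\[\frac{d\log R}{d\sigma}=\frac{c}{2}\bigl(\lambda(R)+1\bigr),\]
whose unique stationary point is $R=r_{1}$ by Lemma \ref{2radii}. Case (2) is then immediate: the constant function $R\equiv r_{1}$ satisfies the ODE with the correct initial value, so by uniqueness of ODE solutions it is the solution; it survives on $[0,T)$, hence $T'=T$, and $h(t)=R(t)/(\kappa(t)r_{1})=1/\kappa(t)\to 0$ as $t\to T$ since $\kappa(t)\to\infty$.

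For case (1), with $r<r_{1}$, Lemma \ref{2radii} gives $\lambda(R)+1<0$ on $(0,r_{1})$, so by comparison with the stationary barrier $R\equiv r_{1}$ the solution stays in $(0,r_{1})$ and is strictly decreasing in $\sigma$. A limit argument rules out a positive limit $R_{\infty}\in(0,r_{1})$, since otherwise $d\log R/d\sigma$ would be bounded away from zero, contradicting the finiteness of $\lim_{\sigma\to\infty}\log R$; hence $R(\sigma)\to 0$. To show that this happens in finite $\sigma$-time (and hence finite $t$-time $T'<T$), I would use the sharp leading-order asymptotic
\[\lambda(R)\sim -C_{0}R^{-2k}\quad (R\to 0),\]
extracted from (\ref{uasy}) with a constant $C_{0}>0$, and introduce $U:=R^{2k}$. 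A short computation gives $dU/d\sigma=kcR^{2k}(\lambda(R)+1)$, which tends to $-kcC_{0}$ as $R\to 0$. Hence $U$ decreases at an eventually constant rate, reaches $0$ at some finite $\sigma^{*}<\infty$, and enjoys the sharp linear asymptotic $U(\sigma)\asymp \sigma^{*}-\sigma$ near $\sigma^{*}$. Translating back to $t$ gives $T'=T(1-e^{-\sigma^{*}})<T$ and $\sigma^{*}-\sigma\asymp (T'-t)/(T-T')$, whence $R(t)^{-2k}=\mathcal{O}(1/(T'-t))$; and $h(t)=R(t)/(\kappa(t)r)\to 0$ because $\kappa(t)$ stays finite on $[0,T']$. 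Case (3) is symmetric: $R$ is now increasing and staying above $r_{1}$, and the sharp asymptotic $\lambda(R)\sim C_{\infty}R^{2k}$ as $R\to\infty$ makes $V:=R^{-2k}$ decrease at an eventually constant rate in $\sigma$, giving the same finite-time blow-up, the rate $R(t)^{2k}=\mathcal{O}(1/(T'-t))$, and $h(t)\to\infty$.

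The main obstacle is upgrading the big-$\mathcal{O}$ estimates of Lemma \ref{2radii}(1) to the sharp two-sided comparisons $\lambda(R)\le -C_{0}R^{-2k}$ and $\lambda(R)\ge C_{\infty}R^{2k}$ with strictly positive constants $C_{0},C_{\infty}$. The bare upper bound on $|\lambda|$ is consistent with $R(\sigma)$ crawling to $0$ (resp.\ $\infty$) only at $\sigma=\infty$, i.e.\ $t=T$, which would contradict $T'<T$; what is really needed is a matching lower bound on $|\lambda|$ of the same order. This has to come from expanding the numerator and denominator of $\lambda$ using (\ref{uasy}), isolating the contributions of the leading coefficients $a_{1}$ and $b_{1}$, and checking that they produce strictly positive constants. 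Once this is in hand, the remaining analysis reduces to the elementary ODE comparison sketched above.
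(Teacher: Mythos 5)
Your proposal is correct and follows essentially the same route as the paper: confine $R(t)$ by monotonicity, then use the asymptotic rate of $\lambda$ near $0$ and $\infty$ to get finite-time collapse and the type-I rate (the paper separates variables in $t$ and integrates $\int dR/\bigl(R(\lambda(R)+1)\bigr)$ rather than passing to your autonomous $\sigma$-time, but this is only a reparametrization). The ``main obstacle'' you flag --- upgrading $\lambda(R)=\mathcal{O}(R^{\mp 2k})$ to a two-sided bound --- is indeed needed and is already implicit in the paper's proof of Lemma \ref{2radii}(1), where the numerator $n-u'+cu''+(n-1)u''/u'$ tends to the nonzero constants $\pm k$ and $u''$ has genuine leading term $k^{2}a_{1}e^{ks}$ (resp.\ $k^{2}b_{1}e^{-ks}$) with $a_{1},b_{1}>0$, so $\lambda(R)R^{\pm 2k}$ converges to a nonzero constant.
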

\begin{proof}
The proof is done by an ordinary argument for the bifurcation phenomenon of an ODE. 
First, we prove the statement (1). 
Assume that $r<r_{1}$. 
In this case, by Lemma \ref{2radii}, there exists a constant $\alpha=\alpha(r)<-1$ such that $\lambda(r)\leq \alpha$ for all $r\in (0,r_{0}]$. 
At $t=0$, we have $R'(0)<0$ by OED (\ref{ODEradi2}). 
If there exists some $t_{0}\in (0,T')$ such that $R(t_{0})=r$, it follows that 
\[R'(t_{0})=\frac{c}{2(T-t_{0})}\Bigl(\lambda(r)+1\Bigr)r< 0. \]
This means that $R(t)\in(0,r]$ for all $t\in [0,T')$ and $R(t)$ is monotonically decreasing. 
By ODE (\ref{ODEradi2}), we have
\begin{align}\label{int0}
\frac{R'(t)}{R(t)\Bigl(\lambda(R(t))+1\Bigr)}=\frac{c}{2(T-t)}, 
\end{align}
and integrating both sides from $t=0$ to $t=T'-0$ we have
\begin{align}\label{intint}
\int_{r}^{R(T'-0)}\frac{1}{R\left(\lambda(R)+1\right)}dR=\int_{0}^{T'-0}\frac{c}{2(T-t)}dt. 
\end{align}
By (1) of Lemma \ref{2radii}, we have 
\begin{align}\label{order0}
\frac{1}{R\left(\lambda(R)+1\right)}=\mathcal{O}(R^{2k-1})\quad\mathrm{as}\quad R\to 0. 
\end{align}
Thus, the left hand side of (\ref{intint}) is integrable, and we have proved that $T'<T$. 
If $\lim_{t\to T'}R(t)>0$ then $R'(t)$ is bounded as $t\to T'$ by ODE (\ref{ODEradi2}), 
and this contradicts that $T'$ is the maximal time of existence of the solution. 
Thus, it holds that $R(t)\to 0$ as $t\to T'$. 
Integrating both sides of (\ref{int0}) from $t$ to $T'$ and combining the estimate of order (\ref{order0}) and $R(t)\to 0$ as $t\to T'$, we have 
\[C(R(t))^{2k}\geq \int_{t}^{T'}\frac{c}{2(T-t)}dt\geq \frac{c}{2T}(T'-t)\]
with some constant $C>0$ and $t$ sufficiently close to $T'$. Thus, we have 
\[(R(t))^{-2k}=\mathcal{O}\left(\frac{1}{T'-t}\right) \quad \mathrm{as} \quad t\to T'. \]
Since $R(t)=\kappa(t)h(t)r$, $R(t)\to 0$ as $t\to T'$ and $\kappa(t)$ is bounded on $[0,T')$, it holds that $h(t)\to 0$ as $t\to T'$. 
Hence, we have proved the statement (1). 

The statement (2) is clear since $\lambda(r_{1})+1=0$. 

Finally, we prove the statement (3). The argument is very similar to the proof of the statement (1). Assume that $r_{1}<r$. 
In this case, by Lemma \ref{2radii}, there exists a constant $\alpha=\alpha(r_{0})>-1$ such that $\lambda(r)\geq \alpha$ for all $r\in [r_{0},\infty)$. 
At $t=0$, we have $R'(0)>0$ by OED (\ref{ODEradi2}). 
If there exists some $t_{0}\in (0,T')$ such that $R(t_{0})=r$, it follows that 
\[R'(t_{0})=\frac{c}{2(T-t_{0})}\Bigl(\lambda(r)+1\Bigr)r>0. \]
This means that $R(t)\in[r,\infty)$ for all $t\in [0,T')$ and $R(t)$ is monotonically increasing. 
By (1) of Lemma \ref{2radii}, we have 
\begin{align}\label{orderinfty}
\frac{1}{R\left(\lambda(R)+1\right)}=\mathcal{O}(R^{-2k-1})\quad\mathrm{as}\quad R\to \infty. 
\end{align}
Thus, the left hand side of (\ref{intint}) is integrable, and we have proved that $T'<T$. 
If $\lim_{t\to T'}R(t)<\infty$ then $R'(t)$ is bounded as $t\to T'$ by ODE (\ref{ODEradi2}), 
and this contradicts that $T'$ is the maximal time of existence of the solution. 
Thus, it holds that $R(t)\to \infty$ as $t\to T'$. 
Integrating both sides of (\ref{int0}) from $t$ to $T'$ and combining the estimate of order (\ref{orderinfty}) and $R(t)\to \infty$ as $t\to T'$, we have 
\[C(R(t))^{-2k}\geq \int_{t}^{T'}\frac{c}{2(T-t)}dt=\frac{c}{2T}(T'-t)\]
with some constant $C>0$ and $t$ sufficient close to $T'$. Thus, we have 
\[(R(t))^{2k}=\mathcal{O}\left(\frac{1}{T'-t}\right) \quad \mathrm{as} \quad t\to T'. \]
Since $R(t)=\kappa(t)h(t)r$, $R(t)\to \infty$ as $t\to T'$ and $\kappa(t)$ is bounded on $[0,T')$, it holds that $h(t)\to \infty$ as $t\to T'$. 
Hence, we have proved the statement (3). 
\end{proof}

\begin{remark}\label{T}
In the case $r<r_{1}$, by integrating both sides of (\ref{int0}) from $t=0$ to $t=T'$ and straightforward computation, 
the maximal time $T'(=T'(r))$ is explicitly given as 
\[T'=T-T\exp\left( \frac{2}{c}\int_{0}^{r}\frac{1}{R\left(\lambda(R)+1\right)}dR  \right). \]
Similarly, in the case $r_{1}<r$, the maximal time $T'(=T'(r))$ is explicitly given as
\[T'=T-T\exp\left( \frac{-2}{c}\int_{r}^{\infty}\frac{1}{R\left(\lambda(R)+1\right)}dR  \right). \]
\end{remark}

By Lemma \ref{mainlemma}, we can prove the following theorem. This contains Theorem \ref{sumofmain2}. 

\begin{theorem}\label{mainRM}
\indent
\begin{enumerate}
\item If $r<r_{1}$, then $T'<T$ and $F_{t}:L(k\,;1)(r)\to N_{k}^{n}$ converges pointwise to $S_{0}$ as $t\to T'$. 
Furthermore, we have $|A_{g_{t}}(F_{t})|_{g_{t}}^2\to \infty$ as $t\to T'$ and 
there exists some constant $C>0$ such that 
\[|A_{g_{t}}(F_{t})|_{g_{t}}^2\leq \frac{C}{T'-t}\quad \mathrm{on} \quad [0,T'). \] 
\item If $r=r_{1}$, then $T'=T$ and $F_{t}:L(k\,;1)(r)\to N_{k}^{n}$ is given by $F_{t}(p)=\kappa^{-1}(t)p$ and converges pointwise to $S_{0}$ as $t\to T$. 
Furthermore, we have $|A_{g_{t}}(F_{t})|_{g_{t}}^2\to \infty$ as $t\to T$ and 
there exists some constant $C>0$ such that 
\[|A_{g_{t}}(F_{t})|_{g_{t}}^2=\frac{C}{T'-t}\quad \mathrm{on} \quad [0,T'). \] 
\item If $r_{1}<r$, then $T'<T$ and $F_{t}:L(k\,;1)(r)\to N_{k}^{n}$ converges pointwise to $S_{\infty}$ as $t\to T'$. 
Furthermore we have $|A_{g_{t}}(F_{t})|_{g_{t}}^2\to \infty$ as $t\to T'$ and 
there exists some constant $C>0$ such that 
\[|A_{g_{t}}(F_{t})|_{g_{t}}^2\leq\frac{C}{T'-t}\quad \mathrm{on} \quad [0,T'). \] 
\end{enumerate}
\end{theorem}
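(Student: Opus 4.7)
The plan is to reduce Theorem \ref{mainRM} entirely to the ODE analysis of Lemma \ref{mainlemma} combined with the asymptotic behavior of $|A(\iota_r)|^2$ provided by Lemma \ref{rateofA}. The pointwise convergence is essentially free from the form of the ansatz, while the curvature estimate reduces, after unfolding the definitions, to a single scaling identity.

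\textbf{Step 1 (Pointwise convergence).} Since $F_t(p) = h(t)p$ on $(\mathbb{C}^n\setminus\{0\})/\mathbb{Z}_k \subset N_k^n$, the point $F_t(p)$ has Euclidean norm $h(t)r$. In case (1), Lemma \ref{mainlemma} gives $h(t)\to 0$ as $t\to T'<T$, so $F_t(p)$ tends to the origin of $\mathbb{C}^n$, which in $N_k^n$ corresponds to $S_0$. In case (2) we have the explicit expression $h(t)=\kappa(t)^{-1}=((T-t)/T)^{c/2}\to 0$ as $t\to T=T'$, giving the stated formula and convergence to $S_0$. In case (3), $h(t)\to\infty$, so $F_t(p)$ leaves every compact subset of $(\mathbb{C}^n\setminus\{0\})/\mathbb{Z}_k$, which means convergence to $S_\infty$ inside $N_k^n$.

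\textbf{Step 2 (A scaling identity for $|A_{g_t}(F_t)|_{g_t}^2$).} By construction $\Phi_t$ is an isometry from $(N_k^n,g_t)$ to $(N_k^n,2(T-t)g)$, so
\[|A_{g_t}(F_t)|_{g_t}^2 = |A_{2(T-t)g}(\Phi_t\circ F_t)|_{2(T-t)g}^2.\]
The composition satisfies $(\Phi_t\circ F_t)(p)=\kappa(t)h(t)p$, so its image is $L(k\,;1)(R(t))$ with $R(t)=\kappa(t)h(t)r$. Because $|A|^2$ is invariant under reparametrization of the source, and a constant rescaling of the ambient metric by a positive factor $\mu$ rescales $|A|^2$ by $1/\mu$, I obtain
\[|A_{g_t}(F_t)|_{g_t}^2 = \frac{1}{2(T-t)}|A(\iota_{R(t)})|^2.\]

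\textbf{Step 3 (Assembling the blow-up rates).} In case (1), Lemma \ref{mainlemma} gives $R(t)\to 0$ with $R(t)^{-2k}=\mathcal{O}(1/(T'-t))$, while Lemma \ref{rateofA} gives $|A(\iota_{R(t)})|^2\to\infty$ and $|A(\iota_{R(t)})|^2=\mathcal{O}(R(t)^{-2k})$. Combined with $T-t\ge T-T'>0$ on $[0,T')$, this yields both $|A_{g_t}(F_t)|_{g_t}^2\to\infty$ and $|A_{g_t}(F_t)|_{g_t}^2\le C/(T'-t)$. Case (3) is identical after swapping the $R\to 0$ asymptotics for the $R\to\infty$ asymptotics. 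In case (2), $R(t)\equiv r_1$ so $|A(\iota_{r_1})|^2$ is a positive constant and the identity of Step 2 becomes the exact equality $|A_{g_t}(F_t)|_{g_t}^2 = C/(T-t)=C/(T'-t)$ with $C=|A(\iota_{r_1})|^2/2$, and the blow-up is immediate from $T'=T$.

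The main obstacle is isolating the correct scaling identity in Step 2: one must simultaneously (i) use that $\Phi_t$ is a time-dependent isometry into a rescaled target, (ii) check that the map $\Phi_t\circ F_t$, although only a diffeomorphism of $L(k\,;1)(r)$ onto $L(k\,;1)(R(t))$, computes the same $|A|^2$ as $\iota_{R(t)}$ by $U(n)$-invariance, and (iii) apply the conformal scaling law $|A|^2_{\mu g}=\mu^{-1}|A|^2_g$ for constant $\mu=2(T-t)$. Once this identity is in hand, Theorem \ref{mainRM} is purely bookkeeping against Lemmas \ref{mainlemma} and \ref{rateofA}.
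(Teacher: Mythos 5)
Your proposal is correct and follows essentially the same route as the paper: the paper also reduces everything to the identity $|A_{g_{t}}(F_{t})|_{g_{t}}^2=\frac{1}{2(T-t)}|A(\iota_{R(t)})|^2$ and then cites Lemma \ref{mainlemma} and Lemma \ref{rateofA}, with case (2) handled by the constancy of $R(t)\equiv r_{1}$. Your Step 2 simply spells out the isometry-plus-scaling justification that the paper compresses into ``it is easy to see,'' and your constant $C=|A(\iota_{r_{1}})|^2/2$ in case (2) is in fact the correct one.
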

\begin{proof}
It is easy to see that 
\begin{align}\label{AR}
|A_{g_{t}}(F_{t})|_{g_{t}}^2=\frac{1}{2(T-t)}|A(\iota_{\kappa(t)h(t)r})|^2=\frac{1}{2(T-t)}|A(\iota_{R(t)})|^2, 
\end{align}
where $|A_{g_{t}}|_{g_{t}}$ and $|A|$ is the norm of the second fundamental form with respect to the ambient metric $g_{t}$ and $g$, respectively. 
In the case (1), we have $h(t)\to 0$ as $t\to T'$ by Lemma \ref{mainlemma}. 
Let $p=(z^{1},\dots,z^{n})$ be a point in $ L(k\,;1)(r) \subset (\mathbb{C}^{n}\setminus\{0\})/\mathbb{Z}_{k}$ and assume that $z^{j}\neq 0$ for some $j$. 
Then, $F_{t}(p)=h(t)p$ is identified with
\[((h(t)z^{1}:\dots:h(t)z^{n}),(1:(h(t)z^{j})^{k}))=((z^{1}:\dots:z^{n}),(1:(h(t)z^{j})^{k}))\]
in $U_{j}\times\mathbb{P}^{1}$ via $\psi$. 
Hence, it is clear that $F_{t}:L(k\,;1)(r)\to N_{k}^{n}$ converges pointwise to $S_{0}$, the 0-section, as $t\to T'$. 
By the formula (\ref{AR}) and Lemma \ref{rateofA} and Lemma \ref{mainlemma}, the remaining of the statement (1) is clear. 
The proof of (3) is similar. 
In the case (2), it is enough to put $C:=2|A(\iota_{r_{1}})|^2$. 
\end{proof}

\begin{remark}
Consider the standard Hopf fibration $S^{2n-1}(r)\to \mathbb{P}^{n-1}$. 
When $r\to 0$, the total space $S^{2n-1}(r)$ collapses to $\mathbb{P}^{n-1}$ and 
this collapsing is just caused by degeneration of $S^{1}$-fiber. 
From this view point, the picture of the collapsing of $L(k\,;1)(r)$ to $S_{0}$ or $S_{\infty}$ (these are diffeomorphic to $\mathbb{P}^{n-1}$) 
is considered as a $\mathbb{Z}_{k}$-quotient analog of the collapsing of the Hopf fibration. 
\end{remark}
\section{The motion by mean curvature flow of a lens space in $N^{n}_{k}$}\label{motionh}
In this section, we observe how a lens space in $N^{n}_{k}$ moves by mean curvature flow. 
As in Section \ref{motion}, let $(N_{k}^{n},\omega)$ be a unique $U(n)$-invariant gradient shrinking K\"ahler Ricci soliton with potential function $f$ given in Theorem \ref{caothm}, 
and for a given $r>0$ let $\iota_{r}:L(k\,;1)(r) \hookrightarrow N_{k}^{n}$ be a lens space with radius $r$.  
Then, by rotationally symmetry, the solution 
\[F_{t}:L(k\,;1)(r) \to N_{k}^{n} \]
of the mean curvature flow equation 
\[\frac{\partial}{\partial t}F_{t}=H(F_{t})\]
is given by 
\[F_{t}(p)=h(t)p\]
with some positive smooth function $h:[0,T')\to\mathbb{R}$ which satisfies the following ODE with initial condition:
\begin{align}\label{ODEradih}
\begin{aligned}
 &h(0)=1\\
&\frac{h'(t)}{h(t)}=c\lambda(h(t)r). 
\end{aligned}
\end{align}
We remark that this is an autonomous differential equation. 
We assume that $T'(=T'(r))$ is the maximal time of existence of the solution. 
Let $r_{2}$ be the specific radius introduced in Lemma \ref{2radii}. Then, by similar arguments as the proof of Lemma \ref{mainlemma}, one can prove the following. 
\begin{lemma}\label{mainlemma2}
\indent
\begin{enumerate}
\item If $r<r_{2}$, then $T'<\infty$ and the solution $h(t)$ of (\ref{ODEradih}) satisfies 
\[h(t)\to 0 \quad \mathrm{and} \quad (h(t))^{-2k}=\mathcal{O}\left(\frac{1}{T'-t}\right) \quad \mathrm{as} \quad t\to T'. \]
\item If $r=r_{2}$, then $T'=\infty$ and $h(t)=1$ is the stationary solution of (\ref{ODEradih}). 
\item If $r_{2}<r$, then $T'<\infty$ and the solution $h(t)$ of (\ref{ODEradih}) satisfies 
\[h(t)\to \infty \quad \mathrm{and} \quad (h(t))^{2k}=\mathcal{O}\left(\frac{1}{T'-t}\right) \quad \mathrm{as} \quad t\to T'. \]
\end{enumerate}
\end{lemma}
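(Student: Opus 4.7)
The plan is to mimic the proof of Lemma \ref{mainlemma}, using that the ODE (\ref{ODEradih}) is autonomous, so I work directly with the substitution $R(t):=h(t)r$, which converts the system into
\[\frac{R'(t)}{R(t)} = c\,\lambda(R(t)), \qquad R(0)=r.\]
The three cases are separated by the sign of $\lambda$ at the initial radius, which is determined by Lemma \ref{2radii}: $\lambda(R)<0$ for $R<r_{2}$, $\lambda(r_{2})=0$, and $\lambda(R)>0$ for $R>r_{2}$.

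For case (1), where $r<r_{2}$, I start with the observation that $R'(0)<0$ and run the standard barrier argument: if $R(t_{0})=r$ for some $t_{0}\in(0,T')$ then $R'(t_{0})=c\lambda(r)r<0$, so $R(t)\in(0,r]$ and is strictly decreasing on $[0,T')$. Separating variables and integrating from $0$ to $T'-0$ gives
\[\int_{r}^{R(T'-0)}\frac{dR}{R\,\lambda(R)} = c\,T'.\]
I then invoke the asymptotic $\lambda(R)=\mathcal{O}(R^{-2k})$ as $R\to 0$ from Lemma \ref{2radii}(1), which yields $1/(R\lambda(R))=\mathcal{O}(R^{2k-1})$, integrable at $0$. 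This bounds the right-hand side, forcing $T'<\infty$. If $\lim_{t\to T'}R(t)>0$, then $R'$ remains bounded near $T'$ contradicting maximality of $T'$, so $R(t)\to 0$. Finally, integrating the separated equation from $t$ to $T'$ and using the same order estimate produces $C\,R(t)^{2k}\ge c(T'-t)$ for $t$ close to $T'$, giving $(h(t))^{-2k}=(R(t)/r)^{-2k}=\mathcal{O}(1/(T'-t))$.

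Case (2) is immediate: at $r=r_{2}$ the right-hand side of (\ref{ODEradih}) vanishes identically, so $h\equiv 1$ is the unique solution and $T'=\infty$. Case (3) is entirely parallel to (1) with reversed signs: $\lambda(r)>0$ gives $R'(0)>0$, the barrier argument shows $R(t)\in[r,\infty)$ is strictly increasing, and the replacement asymptotic $\lambda(R)=\mathcal{O}(R^{2k})$ as $R\to\infty$ from Lemma \ref{2radii}(1) makes $1/(R\lambda(R))=\mathcal{O}(R^{-2k-1})$ integrable at infinity, forcing $T'<\infty$. The maximality contradiction then forces $R(t)\to\infty$, and integrating from $t$ to $T'$ yields $C\,R(t)^{-2k}\ge c(T'-t)$.

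The essential work has already been done in Lemma \ref{2radii}; the remaining subtlety is merely the bookkeeping of signs on each side of $r_{2}$ and the convergence of the two improper integrals, both of which are supplied directly by the asymptotics in Lemma \ref{2radii}(1). I therefore do not anticipate a substantive obstacle beyond faithfully transcribing the argument of Lemma \ref{mainlemma} with $\kappa(t)\equiv 1$ and the factor $c/(2(T-t))$ replaced by the constant $c$.
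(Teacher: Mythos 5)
Your proposal is correct and follows exactly the route the paper intends: the paper does not write out a proof of Lemma \ref{mainlemma2} but states that it follows "by similar arguments as the proof of Lemma \ref{mainlemma}," and your transcription (substitution $R=hr$, barrier argument at the initial radius using the sign of $\lambda$ from Lemma \ref{2radii}(2), separation of variables, integrability at $0$ or $\infty$ from the order estimates in Lemma \ref{2radii}(1), and the maximality argument forcing $R\to 0$ or $R\to\infty$) is precisely that argument with $\kappa\equiv 1$ and $c/(2(T-t))$ replaced by $c$. All signs and estimates check out, so there is nothing to add.
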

Furthermore, as the proof of Theorem \ref{mainRM}, combining Lemma \ref{rateofA} and Lemma \ref{mainlemma2}, we can prove the following. 
\begin{theorem}\label{mainM}
\indent
\begin{enumerate}
\item If $r<r_{2}$, then $T'<\infty$ and $F_{t}:L(k\,;1)(r)\to N_{k}^{n}$ converges pointwise to $S_{0}$ as $t\to T'$. 
Furthermore, we have $|A(F_{t})|^2\to \infty$ as $t\to T'$ and 
there exists some constant $C>0$ such that 
\[|A(F_{t})|^2\leq \frac{C}{T'-t}\quad \mathrm{on} \quad [0,T'), \]
that is, $F_{t}$ develops singularities of type I. 
\item If $r=r_{2}$, then $F_{t}\equiv F_{0}:L(k\,;1)(r)\to N_{k}^{n}$ $(t\in[0,\infty))$ is the stationary solution of the mean curvature flow since $F_{0}$ is a minimal immersion. 
\item If $r_{2}<r$, then $T'<\infty$ and $F_{t}:L(k\,;1)(r)\to N_{k}^{n}$ converges pointwise to $S_{\infty}$ as $t\to T'$. 
Furthermore we have $|A(F_{t})|^2\to \infty$ as $t\to T'$ and 
there exists some constant $C>0$ such that 
\[|A(F_{t})|^2\leq\frac{C}{T'-t}\quad \mathrm{on} \quad [0,T'), \]
that is, $F_{t}$ develops singularities of type I. 
\end{enumerate}
\end{theorem}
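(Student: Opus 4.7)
The plan is to reduce Theorem \ref{mainM} to Lemma \ref{mainlemma2} and Lemma \ref{rateofA} by following the proof of Theorem \ref{mainRM} almost verbatim, with one simplification: the ambient metric is now fixed at $g$, so there is no conformal factor $1/(2(T-t))$ to carry around. The central identity to exploit is
\[
|A(F_{t})|^{2}=|A(\iota_{h(t)r})|^{2},
\]
which holds because $F_{t}(p)=h(t)p$ is literally the inclusion of $L(k\,;1)(h(t)r)$ into $N_{k}^{n}$.

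Case (2) is handled first, and it is essentially by definition: when $r=r_{2}$, Theorem \ref{mainthm} says $\iota_{r_{2}}$ is a minimal embedding, so $H(\iota_{r_{2}})=0$ and $F_{t}\equiv\iota_{r_{2}}$ is the unique solution of the mean curvature flow equation starting at $F_{0}=\iota_{r_{2}}$. Equivalently, $\lambda(r_{2})=0$ makes $h\equiv 1$ the unique solution of ODE (\ref{ODEradih}).

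For case (1), assume $r<r_{2}$. Lemma \ref{mainlemma2}(1) gives $T'<\infty$, $h(t)\to 0$, and $(h(t))^{-2k}=\mathcal{O}(1/(T'-t))$ as $t\to T'$. Pointwise convergence to $S_{0}$ follows exactly as in Theorem \ref{mainRM}: for $p=(z^{1},\dots,z^{n})\in L(k\,;1)(r)$ with $z^{j}\neq 0$, the trivialization $\psi$ of Section \ref{caoconst} identifies $F_{t}(p)=h(t)p$ with
\[
\bigl((z^{1}:\cdots:z^{n}),\,(1:(h(t)z^{j})^{k})\bigr)\in U_{j}\times\mathbb{P}^{1},
\]
which converges to a point of the $0$-section $S_{0}$ as $h(t)\to 0$. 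For the type I bound, combine Lemma \ref{rateofA} (which gives $|A(\iota_{\rho})|^{2}=\mathcal{O}(\rho^{-2k})$ as $\rho\to 0$) with the rate from Lemma \ref{mainlemma2}(1):
\[
|A(F_{t})|^{2}=|A(\iota_{h(t)r})|^{2}=\mathcal{O}\bigl((h(t)r)^{-2k}\bigr)=\mathcal{O}\bigl(1/(T'-t)\bigr).
\]
This produces the inequality $|A(F_{t})|^{2}\leq C/(T'-t)$ on a left-neighborhood of $T'$, and the inequality extends to all of $[0,T')$ after enlarging $C$ if necessary, since $|A(F_{t})|^{2}$ is continuous and therefore bounded on each compact subinterval of $[0,T')$.

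Case (3) is the mirror image, using the other half of Lemma \ref{rateofA} ($|A(\iota_{\rho})|^{2}=\mathcal{O}(\rho^{2k})$ as $\rho\to\infty$) and Lemma \ref{mainlemma2}(3); under $\psi$, the fiber component $(1:(h(t)z^{j})^{k})$ now tends to $(0:1)\in\mathbb{P}^{1}$ as $h(t)\to\infty$, so $F_{t}(p)$ converges into the $\infty$-section $S_{\infty}$. Given that the substantive analysis has already been carried out in Lemma \ref{mainlemma2} and Lemma \ref{rateofA}, the main obstacle is essentially absent; the only subtlety is making sure the uniform constant $C$ in the type I bound is valid on all of $[0,T')$ rather than only asymptotically, which is a routine compactness/continuity observation.
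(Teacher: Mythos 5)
Your proposal is correct and follows the same route the paper intends: the paper proves Theorem \ref{mainM} exactly by repeating the argument of Theorem \ref{mainRM} with Lemma \ref{mainlemma2} in place of Lemma \ref{mainlemma}, using the identity $|A(F_{t})|^{2}=|A(\iota_{h(t)r})|^{2}$ (the fixed-metric analogue of (\ref{AR}), without the factor $1/(2(T-t))$) together with Lemma \ref{rateofA}. No discrepancies.
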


\begin{remark}
As Remark \ref{T}, the maximal time $T'(=T'(r))$ is explicitly given as follows. 
When $r<r_{2}$, 
\[T'=\frac{-1}{c}\int_{0}^{r}\frac{1}{r\lambda(r)}dr, \]
and when $r_{2}<r$, 
\[T'=\frac{1}{c}\int_{r}^{\infty}\frac{1}{r\lambda(r)}dr. \]
\end{remark}

\end{document}